\theoremstyle{plain}
\newtheorem{proposition}{\bf Proposition}[section]
\newtheorem{theorem}{\bf Theorem}[section]
\def\tto{\;{\lower 1pt \hbox{$\rightarrow$}}\kern -10pt
\hbox{\raise 2pt \hbox{$\rightarrow$}}\;}
\def\R{{\rm I\!R}}
\def\N{{\rm I\!N}}
\def\ov{\bar{v}}
\def\co{\mbox{\rm co}\,}
\def\gph{\mbox{\rm gph}\,}
\def\epi{\mbox{\rm epi}\,}
\begin{document}
\pagestyle{myheadings}

\newtheorem{Theorem}{Theorem}[section]
\newtheorem{Proposition}[Theorem]{Proposition}
\newtheorem{Remark}[Theorem]{Remark}
\newtheorem{Lemma}[Theorem]{Lemma}
\newtheorem{Corollary}[Theorem]{Corollary}
\newtheorem{Definition}[Theorem]{Definition}
\newtheorem{Example}[Theorem]{Example}
\renewcommand{\theequation}{\thesection.\arabic{equation}}
\normalsize

\setcounter{equation}{0}

\title{Analyzing a Maximum Principle for Finite Horizon State Constrained Problems via Parametric Examples. Part 1: Problems with Unilateral State Constraints\footnote{Financial supports from several research projects in Taiwan and Vietnam are gratefully acknowledged.}}

\author{V.T.~Huong\footnote{Institute of Mathematics, Vietnam Academy of Science and Technology, 18 Hoang
Quoc Viet, Hanoi 10307, Vietnam; email: vthuong@math.ac.vn; huong263@gmail.com.},\ \, J.-C.~Yao\footnote{Center for General Education, China Medical University, 
	Taichung 40402, Taiwan; Email: yaojc@mail.cmu.edu.tw},\ \, and\ \, N.D. Yen\footnote{Institute
of Mathematics, Vietnam Academy of Science and Technology, 18 Hoang
Quoc Viet, Hanoi 10307, Vietnam; email: ndyen@math.ac.vn.}}

\maketitle
\date{}
\centerline{\textit{(Dedicated to Professor Do Sang Kim on the occasion of his 65th birthday)}}

\bigskip
\begin{quote}
\noindent {\bf Abstract.} In the present paper, the maximum principle for finite horizon state constrained problems from the book by R. Vinter [\textit{Optimal Control}, Birkh\"auser, Boston, 2000; Theorem~9.3.1] is analyzed via parametric examples. The latter has origin in a recent paper by  V.~Basco, P.~Cannarsa, and H.~Frankowska, and resembles the optimal growth problem in mathematical economics. The solution existence of these parametric examples is established by invoking Filippov's existence theorem for Mayer problems. Since the maximum principle is only a necessary condition for local optimal processes, a large amount of additional investigations is needed to obtain a comprehensive synthesis of finitely many processes suspected for being local minimizers. Our analysis not only helps to understand the principle in depth, but also serves as a sample of applying it to meaningful prototypes of  economic optimal growth models. Problems with unilateral state constraints are studied in Part 1 of the paper. Problems with bilateral state constraints will be addressed in Part 2. 

\noindent {\bf Keywords:}\  Finite horizon optimal control problem, state constraint,  maximum principle, solution existence theorem, function of bounded variation, Borel measurable function, Lebesgue-Stieltjes integral.

\noindent {\bf 2010 Mathematics Subject Classification:}\ 49K15, 49J15.

\end{quote}
\section{Introduction}
It is well known that optimal control problems with state constraints are models of importance, but one usually faces with a lot of difficulties in analyzing them. These models have been considered since the early days of the optimal control theory. For instance, the whole Chapter VI of the classical work \cite[pp.~257--316]{Pont_Bolt_Gamk_Mish_1962} is devoted to problems with restricted phase coordinates. There are various forms of the maximum principle for optimal control problems with state constraints; see, e.g., \cite{Hartl_Sethi_Vickson_1995}, where the relations between several forms are shown and a series of numerical illustrative examples have been solved.

To deal with state constraints, one has to use functions of bounded variation, Borel measurable functions, Lebesgue-Stieltjes integral, nonnegative measures on the $\sigma-$algebra of the Borel sets, the Riesz Representation Theorem for the space of continuous functions, and so on.

By using the maximum principle presented in \cite[pp.~233--254]{Ioffe_Tihomirov_1979}, Phu \cite{Phu_1989,Phu_1992} has proposed  an ingenious method called \textit{the method of region analysis} to solve several classes of optimal control problems with one state and one control variable, which have both state and control constraints. Minimization problems of the Lagrange type were considered by the author and, among other things, it was assumed that integrand of the objective function is strictly convex with respect to the control variable. To be more precise, the author considered \textit{regular problems}, i.e., the  optimal control problems where the Pontryagin function is strictly convex with respect to the control variable. 

In the present paper, the maximum principle for finite horizon state constrained problems from the book by Vinter \cite[Theorem~9.3.1]{Vinter_2000} is analyzed via parametric examples. The latter has origin in a recent paper by Basco, Cannarsa, and Frankowska \cite[Example~1]{Basco_Cannarsa_Frankowska_2018}, and resembles the optimal growth problem in mathematical economics (see, e.g., \cite[pp.~617--625]{Takayama_1974}). The solution existence of these parametric examples, which are \textit{irregular optimal control problems} in the sense of Phu \cite{Phu_1989,Phu_1992}, is established by invoking Filippov's existence theorem for Mayer problems \cite[Theorem~9.2.i and Section~9.4]{Cesari_1983}. Since the maximum principle is only a necessary condition for local optimal processes, a large amount of additional investigations is needed to obtain a comprehensive synthesis of finitely many processes suspected for being local minimizers. Our analysis not only helps to understand the principle in depth, but also serves as a sample of applying it to meaningful prototypes of  economic optimal growth models. 

Note that the \textit{maximum principle} for finite horizon state constrained problems in \cite[Chapter~9]{Vinter_2000} covers many known ones for smooth problems and allows us to deal with nonsmooth problems by using the \textit{Mordukhovich normal cone} and the \textit{Mordukhovich subdifferential} \cite{Mordukhovich_2006a,Mordukhovich_2006b,Mordukhovich_2018}, which are also called the limiting normal cone and the limiting subdifferential. This principle is a necessary optimality condition which asserts the existence of a multipliers set $(p, \mu, \nu,\gamma)$ consisting of an \textit{absolutely continuous function}  $p$, a \textit{function of bounded variation}~$\mu$, a \textit{Borel measurable function} $\nu$, and a \textit{real number} $\gamma\geq 0$, where $(p,\mu,\gamma)\neq (0,0,0)$, such that the four conditions (i)--(iv) in Theorem~\ref{V_thm9.3.1 necessary condition} below are satisfied. The relationships between these conditions are worthy a detailed analysis. We will present such an analysis via three parametric examples of optimal control problems of the Langrange type, which have five parameters $(\lambda,a,x_0,t_0,T)$, where $\lambda>0$ appears in the description of the objective function, $a>0$ appears in the differential equation, $x_0$ is the initial value, $t_0$ is the initial time, and $T$ is the terminal time. Observe that,  in Example~1 of \cite{Basco_Cannarsa_Frankowska_2018}, $T=\infty$, $x_0$ and $t_0$ are fixed. Problems with unilateral state constraints are studied in Part 1 of the paper. Problems with bilateral state constraints will be addressed in Part 2. 

This Part 1 is organized as follows. Section \ref{Background Materials} presents some background materials including the above-mentioned maximum principle and Filippov's existence theorem for Mayer problems. Control problems without state constraints are considered in Section \ref{Example 1}, while control problems with unilateral state constraints are studied in Section \ref{Example 2}. Some concluding remarks are given in Section \ref{Conclusions}.

\section{Background Materials}\label{Background Materials}
In this section, we give some notations, definitions, and results that will be used repeatedly in the sequel. 

\subsection{Notations and Definitions }
 The symbol $\R$ (resp., $\N)$ denotes the set of real numbers (resp., the set of positive integers). The norm in the $n$-dimensional Euclidean space $\R^n$ is denoted by $\|.\|$. For a subset $C\subset\R^n$, we abbreviate its \textit{convex hull} to $\co C$. For a  set-valued map $F: \R^n \rightrightarrows \R^m$,  we call the set ${\rm gph}\, F := \{(x,y) \in \R^n \times \R^m\,:\, y\in F(x) \}$ the \textit{graph} of $F$.
 
 Let $\Omega \subset \R^n$ be a closed set and $\ov \in \Omega$. The {\it Fr\'echet normal cone} (also called the {\it prenormal
 	cone}, or the {\it regular normal cone}) to
 $\Omega\subset\R^n$ at $\ov$ is given by
 \begin{eqnarray*}
  \widehat
 	N_{\Omega}(\ov)=\left\{v'\in\R^n\,:\,
 	\displaystyle\limsup_{v\xrightarrow{\Omega}\ov}\,\displaystyle\frac{\langle
 		v',v-\ov \rangle}{\|v-\ov \|}\leq 0\right\},\end{eqnarray*} where
 $v\xrightarrow{\Omega}\ov$ means $v\to \ov$ with $v\in\Omega$.  The {\it Mordukhovich} (or {\it limiting}) {\it normal cone} to $\Omega$ at $\ov$ is defined by
 \begin{eqnarray*} N_{\Omega}(\ov)
 		=\big\{v'\in\R^n\,:\,\exists \mbox{ sequences } v_k\to \ov,\ v_k'\rightarrow v' \mbox{ with } v_k'\in \widehat
 		N_{\Omega}(v_k)\; \mbox{for all}\; k\in \N\big\}.\end{eqnarray*}
Given an extended real-valued function $\varphi: \R^n \rightarrow \R\cup\{-\infty, +\infty\}$, one defines the \textit{epigraph} of $\varphi$ by  $\epi \varphi=\{(x, \mu)\in \R^n\times \R \,:\, \mu \geq \varphi (x)\}$. The \textit{Mordukhovich subdifferential} (or \textit{limiting subdifferential}) of $\varphi$ at $\bar x\in \R^n$ with $|\varphi (\bar x)|< \infty$ is defined by
\begin{equation*}
\partial\varphi(\bar x)=\big\{x^*\in \R^n \;:\; (x^*, -1)\in N\big((\bar x, \varphi(\bar x)); \epi \varphi \big)\big\}.
\end{equation*}
If $|\varphi (x)|= \infty$, then one puts $\partial\varphi(\bar x)=\emptyset$.
The reader is referred to \cite[Chapter~1]{Mordukhovich_2006a} and \cite[Chapter~1]{Mordukhovich_2018} for comprehensive treatments of the Fr\'echet normal cone, the limiting normal cone, the limiting subdifferential, and the related calculus rules.
 
For a given segment $[t_0, T]$ of the real line, we denote the $\sigma$-algebra of its Lebesgue measurable subsets (resp., the $\sigma$-algebra of its Borel measurable subsets) by $\mathcal{L}$ (resp., $\mathcal{B}$). The Sobolev space  $W^{1,1}([t_0, T], \R^n)$ is the linear space of the absolutely continuous functions $x:[t_0, T] \to \R^n$ endowed with the norm $\|x\|_{W^{1,1}}=\|x(t_0)\|+\displaystyle\int_{t_0}^T \|\dot x(t)\| dt$ (see, e.g., \cite[p.~21]{Kolmogorov_Fomin_1970} for this and another equivalent norm).

As in \cite[p.~321]{Vinter_2000}, we consider the following \textit{finite horizon optimal control problem of the Mayer type}, denoted by $\mathcal M$,
\begin{equation}\label{cost functional_FP}
\mbox{Minimize}\ \; g(x(t_0), x(T)),
\end{equation}
over $x \in W^{1,1}([t_0, T], \R^n)$  and measurable functions $u:[t_0, T] \to \R^m$ satisfying
\begin{equation}\label{state control system_FP}
	\begin{cases}
		\dot x(t)=f(t, x(t), u(t)),\quad &\mbox{a.e.\ } t\in [t_0, T]\\
		(x(t_0), x(T))\in C\\
		u(t)\in U(t), &\mbox{a.e.\ } t\in [t_0, T]\\
		h(t, x(t))\leq 0, & \forall t\in [t_0, T],
	\end{cases}
\end{equation}
where $[t_0, T]$ is a given interval, $g: \R^n\times \R^n \to \R$, $f: [t_0, T]\times \R^n\times \R^m \to \R^n$, and $h:[t_0, T]\times \R^n \to \R$ are given functions, $C\subset \R^n\times \R^n$ is a closed set, and $U: [t_0, T]\rightrightarrows \R^m$ is a set-valued map. 

A measurable function $u:[t_0, T] \to \R^m$ satisfying $u(t)\in U(t)$ a.e. $t\in [t_0, T]$ is called a \textit{control function}. A \textit{process} $(x, u)$ consists of a control function $u$ and an arc $x \in W^{1,1}([t_0, T]; \R^n)$ that is a solution to the differential equation in \eqref{state control system_FP}. A \textit{state trajectory} $x$ is the first component of some process $(x, u)$. A process $(x, u)$ is called \textit{feasible} if the state trajectory satisfies the \textit{endpoint constraint} $(x(t_0), x(T))\in C$ and the \textit{state constraint} $h(t, x(t))\leq 0$ for all $t\in [t_0, T]$.

Due to the appearance of the state constraint, the problem $\mathcal M$ in \eqref{cost functional_FP}--\eqref{state control system_FP} is said to be an \textit{optimal control problem with state constraints}. But, if the inequality $h(t, x(t))\leq 0$ is fulfilled for every $(t, x(t))$ with $t \in [t_0, T]$ and $x \in W^{1,1}([t_0, T]; \R^n)$ (for example, when $h$ is constant function having a fixed nonpositive value), i.e., the condition $h(t, x(t))\leq 0$ for all  $t\in [t_0, T]$ can be removed from~\eqref{state control system_FP}, then one says that $\mathcal M$ an \textit{optimal control problem without state constraints}.

 The \textit{Hamiltonian} $\mathcal H: [t_0, T]\times \R^n \times \R^n\times \R^m \to \R$ of \eqref{state control system_FP} is defined by 
\begin{equation}\label{Hamiltonian}
\mathcal H(t, x, p, u):=p.f(t, x, u)=\displaystyle\sum_{i=1}^np_if_i(t, x, u). 
\end{equation}

\begin{Definition}\label{local_minimizer} {\rm 	
	A feasible process $(\bar x, \bar u)$ is called a $W^{1,1}$ \textit{local minimizer} for  $\mathcal M$ if there exists $\delta>0$ such that $g(\bar x(t_0), \bar x(T))\leq g(x(t_0), x(T))$ for any feasible processes $(x, u)$ satisfying $\|\bar x-x\|_{W^{1,1}} \leq \delta$.}
	\end{Definition}
	
	\begin{Definition}\label{global_minimizer} {\rm 	
			A feasible process $(\bar x, \bar u)$ is called a $W^{1,1}$ \textit{global minimizer} for  $\mathcal M$ if, for any feasible processes $(x, u)$, one has $g(\bar x(t_0), \bar x(T))\leq g(x(t_0), x(T))$.}
	\end{Definition}
	
\begin{Definition}[{See \cite[p.~329]{Vinter_2000}}]\label{partial_hybrid subdiff} {\rm 
The \textit{partial hybrid subdifferential} $\partial^>_x h(t, x)$ of $h(t,x)$ w.r.t. $x$ is given by
\begin{align}\label{h-partial subdiff}
\partial^>_x h(t, x):=\co\big\{\xi \,:\, &\mbox{ there exists }(t_i, x_i)\overset{h}{\rightarrow}(t, x) \mbox{ such that }\nonumber\\
&\ h(t_k, x_k)>0 \mbox{ for all } k \mbox{ and }\nabla_xh(t_k, x_k)\to \xi\big\}, 
\end{align} where the symbol $(t_k, x_k)\overset{h}{\rightarrow}(t, x)$ means that $(t_k, x_k)\rightarrow (t, x)$ and $h(t_k, x_k)\rightarrow h(t, x)$ as $k\to\infty$.}
\end{Definition}

\subsection{A Maximum Principle for State Constrained Problems}

Due to the appearance of the state constraint $h(t, x(t))\leq 0$ in $\mathcal{M}$, one has to introduce a multiplier that is an element in the topological dual $C^*([t_0, T]; \R)$ of the space of continuous functions $C([t_0, T]; \R)$ with the supremum norm. By the Riesz Representation Theorem (see, e.g., \cite[Theorem~6, p.~374]{Kolmogorov_Fomin_1970} and \cite[Theorem~1, pp.~113--115]{Luenberger_1969}), any bounded linear 
functional $f$ on $C([t_0, T]; \R)$ can be uniquely represented in the form 
\begin{equation*}
f(x) =\int_{[t_0,T]} x(t) dv(t),
\end{equation*}
where $v$ is \textit{a function of bounded variation} on $[t_0, T]$ which vanishes at $t_0$ and which are continuous from the right at every point $\tau\in (t_0, T)$, and $\displaystyle\int_{[t_0,T]} x(t) dv(t)$ is the Riemann-Stieltjes integral of $x$ with respect to $v$ (see, e.g., \cite[p.~364]{Kolmogorov_Fomin_1970}). 
The set of the elements of $C^*([t_0, T]; \R)$ which are given by nondecreasing functions $v$ is denoted by $C^\oplus(t_0, T)$.

Every $v \in C^*([t_0, T]; \R)$ corresponds to \textit{a finite regular measure}, denoted by $\mu_v$, on the $\sigma$-algebra ${\mathcal B}$ of the Borel subsets of $[t_0, T]$ by the formula
	\begin{equation*}
	\mu_v(A) :=\int_{[t_0,T]} \chi_A(t) dv(t),
	\end{equation*} where $\chi_A(t)=1$ for $t\in A$ and $\chi_A(t)=0$ if $t\notin A$. Due to the correspondence $v\mapsto\mu_v$, we call every element $v\in C^*([t_0, T]; \R)$ a ``measure" and identify $v$ with $\mu_v$. Clearly,  the measure corresponding to each $v\in C^\oplus(t_0, T)$ is nonnegative.
	
	The integrals $\displaystyle\int_{[t_0, t)}\nu(s)d\mu(s)$ and $\displaystyle\int_{[t_0, T]}\nu(s)d\mu(s)$ of a Borel measurable function $\nu$ in next theorem are understood in the sense of the Lebesgue-Stieltjes integration \cite[p.~364]{Kolmogorov_Fomin_1970}. 

\begin{theorem}[{See \cite[Theorem~9.3.1]{Vinter_2000}}]\label{V_thm9.3.1 necessary condition}
Let $(\bar x, \bar u)$ be a $W^{1,1}$ local minimizer for $\mathcal M$. Assume that for some $\delta>0$, the following hypotheses are satisfied:
\begin{enumerate}[\rm (H1)]
\item $f(., x, .)$ is $\mathcal{L}\times \mathcal{B}^m$ measurable, for fixed $x$. There exists a Borel measurable function $k(., .):[t_0, T]\times \R^m \to \R$ such that $t \mapsto k(t, \bar u(t))$ is integrable and 
\begin{equation*}
\| f(t, x, u)-f(t, x', u)\|\leq k(t, u)\|x-x'\|, \quad \forall x, x'\in \bar x(t)+\delta\bar B,\; \forall u \in U(t)
\end{equation*} for almost all $t\in [t_0, T]$;
\item $\gph U$ is a Borel set in $[t_0,T]\times\R^m$;
\item $g$ is Lipschitz continuous on the ball $(\bar x(t_0), \bar x(T))+\delta\bar B$;
\item $h$ is upper semicontinuous and there exists $K>0$ such that 
\begin{equation*}
\| h(t, x)-h(t, x')\|\leq K\|x-x'\|, \quad \forall x, x'\in \bar x(t)+\delta\bar B,\; \forall t \in [t_0, T].
\end{equation*}
\end{enumerate}
Then there exist $p\in W^{1,1}([t_0, T]; \R^n)$, $\gamma \geq 0$, $\mu \in C^\oplus(t_0, T)$, and a Borel measurable  function $\nu:[t_0, T]\to  \R^n$ such that $(p, \mu, \gamma)\neq (0, 0, 0)$, and for $q(t):=p(t)+\eta(t)$ with $\eta(t):=
\displaystyle\int_{[t_0, t)}\nu(s)d\mu(s)$ if $t\in [t_0, T)$ and $\eta(T):=\displaystyle\int_{[t_0, T]}\nu(s)d\mu(s)$, the following holds true:
\begin{enumerate}[\rm (i)]
\item $\nu(t)\in \partial^>_x h(t, \bar x(t))\ \mu-\mbox{a.e.};$
 \item $-\dot p(t)\in \co \partial_x\mathcal{H}(t, \bar x(t), q(t), \bar u(t))$ a.e.;
\item $(p(t_0), -q(T))\in \gamma \partial g(\bar x(t_0), \bar x(T))+N_C(\bar x(t_0), \bar x(T))$;
\item $\mathcal{H}(t, \bar x(t), q(t), \bar u(t))=\max_{u\in U(t)}\mathcal{H}(t, \bar x(t), q(t), u)$ a.e.
\end{enumerate}
\end{theorem}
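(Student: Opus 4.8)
The plan is to derive this state-constrained maximum principle by reduction to the unconstrained case through penalization of the pointwise inequality $h(t,x(t)) \leq 0$, following the scheme that has become standard in nonsmooth optimal control. First I would introduce, for each integer $k$, the penalized Mayer cost obtained by adding to $g(x(t_0),x(T))$ a term of the form $k \max_{t \in [t_0,T]} \{h(t,x(t))\}_+$. Since $(\bar x, \bar u)$ is a $W^{1,1}$ local minimizer of $\mathcal M$ and this penalty vanishes along feasible arcs, a sequential penalization argument—legitimate under the Lipschitz hypothesis (H4)—shows that $(\bar x,\bar u)$ is an approximate minimizer of each penalized problem over the $\delta$-tube, with the penalty error tending to zero as $k \to \infty$.

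Next I would apply Ekeland's variational principle on the space of control functions, metrized via the $L^1$ distance on the associated state trajectories, to produce for each $k$ a genuine minimizer $(x_k,u_k)$ of a slightly perturbed penalized problem, with $(x_k,u_k) \to (\bar x,\bar u)$. These perturbed problems carry no pointwise state constraint, so the maximum principle for finite-horizon problems without state constraints applies to each of them. Computing the limiting subdifferential of the max-type penalty $x \mapsto \max_t h(t,x(t))$—whose elements are represented by nonnegative measures supported on the set where the maximum is attained, paired with the gradients $\nabla_x h$—then yields multipliers $(p_k,\gamma_k)$, a costate $p_k \in W^{1,1}$, and a nonnegative measure $\mu_k \in C^\oplus(t_0,T)$ together with a Borel density $\nu_k$, so that the adjoint inclusion, the endpoint transversality condition, and the Hamiltonian maximization all hold for the shifted costate $q_k = p_k + \eta_k$, where $\eta_k(t) = \int_{[t_0,t)}\nu_k(s)\,d\mu_k(s)$.

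The heart of the argument is the passage to the limit $k \to \infty$. After normalizing so that $\|p_k\|_{W^{1,1}} + \gamma_k + \mu_k([t_0,T]) = 1$, I would extract subsequences: $\gamma_k \to \gamma \geq 0$; $p_k \to p$ in $W^{1,1}$, using the Lipschitz bound (H1) together with a Gronwall estimate to obtain compactness of $\dot p_k$; and, crucially, $\mu_k$ converging weak$^*$ in $C^*([t_0,T];\R)$ to a nonnegative limit which, by the Riesz Representation Theorem, is identified with $\mu \in C^\oplus(t_0,T)$ under the normalization that vanishes at $t_0$ and is right-continuous on $(t_0,T)$. The normalization forces the nontriviality $(p,\mu,\gamma) \neq (0,0,0)$. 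The density $\nu$ then emerges as a disintegration of the weak$^*$ limit of the products $\nu_k\,d\mu_k$ against $\mu$; the definition of the partial hybrid subdifferential in \eqref{h-partial subdiff} is engineered precisely so that these limits lie in $\partial^>_x h(t,\bar x(t))$ for $\mu$-almost every $t$, giving condition (i). Passing to the limit in the remaining relations, via closedness of the limiting subdifferential and upper semicontinuity of $h$, delivers (ii)--(iv).

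The main obstacle will be this limiting analysis of the measures and the identification of $\nu$. One must show that the support of $\mu$ is contained in the active set $\{t : h(t,\bar x(t)) = 0\}$—which uses the upper semicontinuity of $h$ and the fact that $h(t,x_k(t))$ can exceed zero only on a set that collapses onto the active set in the limit—and that the vector-valued limit of $\nu_k\,d\mu_k$ can be factored as a Borel function $\nu$ times the scalar measure $\mu$, with $\nu(t)$ landing in the convex set $\partial^>_x h(t,\bar x(t))$. Controlling these objects requires the full Lebesgue--Stieltjes machinery and the hybrid-subdifferential calculus, and it is here that hypothesis (H4) and the upper semicontinuity of $h$ are indispensable; the correct right-continuity normalization of $\mu$ is likewise essential, since it is exactly the convention under which the Riesz representation is unique.
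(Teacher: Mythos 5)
First, a point of order: the paper does not prove this theorem at all. It is quoted verbatim from Vinter's book (Theorem~9.3.1 of \cite{Vinter_2000}) and used as a black box; the paper's contribution lies entirely in applying it to the parametric examples. So there is no in-paper proof to compare your attempt against, and any assessment must be against the known proof in the cited source. That said, your outline does follow the broad strategy of the standard proofs of such results (and of Vinter's own argument): penalize the state constraint by a supremum functional, use Ekeland's variational principle to pass to exact minimizers of perturbed unconstrained problems, apply the state-constraint-free maximum principle (essentially Proposition~\ref{V_thm6.2.1 necessary condition} of the paper), and take limits in the multipliers.

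However, as a proof your text is a program rather than an argument, and the three places where you wave your hands are precisely where the entire technical content of the theorem lives. (1) The claim that the limiting subdifferential of the functional $x(\cdot)\mapsto\max_{t}\{h(t,x(t))\}_+$ on $W^{1,1}$ is ``represented by nonnegative measures supported on the set where the maximum is attained, paired with the gradients $\nabla_x h$'' is itself a nontrivial theorem (a max-rule for an infinite index set, in a nonsmooth, merely upper semicontinuous setting); it is exactly this computation that produces $\mu$, $\nu$, and the partial hybrid subdifferential $\partial^>_x h$, and you cannot cite it as known without circularity, since it is of the same depth as the result being proved. (2) The exact/sequential penalization step needs an argument that near-minimizers of the penalized problems stay in the $\delta$-tube and that the penalty value tends to zero at the correct rate; (H4) alone does not give this without a controllability or metric-regularity type estimate relating the violation of $h\le 0$ to the $W^{1,1}$ distance to the feasible set. (3) The factorization of the weak$^*$ limit of the vector measures $\nu_k\,d\mu_k$ as $\nu\,d\mu$ with a \emph{Borel measurable} selection $\nu(t)\in\partial^>_x h(t,\bar x(t))$ requires a Radon--Nikod\'ym/disintegration argument plus a measurable selection theorem, together with an upper semicontinuity property of the multifunction $t\mapsto\partial^>_x h(t,\bar x(t))$ along the converging trajectories $x_k$; none of this is supplied. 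Until these three steps are carried out, the proposal establishes nothing beyond a (reasonable) plan of attack.
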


Applying Theorem \ref{V_thm9.3.1 necessary condition} to unconstrained optimal control problems, one has next proposition.

\begin{proposition}[{See \cite[Theorem~6.2.1]{Vinter_2000}}]\label{V_thm6.2.1 necessary condition}
	Suppose that $\mathcal M$ is an optimal control problem without state constraints. Let $(\bar x, \bar u)$ be a $W^{1,1}$ local minimizer for $\mathcal M$. Assume that for some $\delta>0$, the following hypotheses are satisfied.
	\begin{enumerate}[\rm (H1)]
		\item For every $x\in\R^n$, the function $f(., x, .): [t_0, T]\times \R^m \to \R^n$ is $\mathcal{L}\times \mathcal{B}^m$ measurable. In addition, there exists a Borel measurable function $k:[t_0, T]\times \R^m \to \R$ such that $t \mapsto k(t, \bar u(t))$ is integrable and 
		\begin{equation*}
		\| f(t, x, u)-f(t, x', u)\|\leq k(t, u)\|x-x'\|, \quad \forall x, x'\in \bar x(t)+\delta\bar B, u \in U(t), a.e.;
		\end{equation*}
		\item $\gph U$ is an $\mathcal{L}\times \mathcal{B}^m$ measurable set in $[t_0,T]\times\R^m$;
		\item $g$ is locally Lipschitz continuous.
	\end{enumerate}
	Then there exist $p\in W^{1,1}([t_0, T]; \R^n)$ and  $\gamma \geq 0$ such that $(p, \gamma)\neq (0, 0)$ and the following holds true:
	\begin{enumerate}[\rm (i)]
		\item $-\dot p(t)\in \co \partial_x\mathcal{H}(t, \bar x(t), p(t), \bar u(t))$ a.e.;
		\item $(p(t_0), -p(T))\in \gamma \partial g(\bar x(t_0), \bar x(T))+N_C(\bar x(t_0), \bar x(T))$;
		\item $\mathcal{H}(t, \bar x(t), p(t), \bar u(t))=\max_{u\in U(t)}\mathcal{H}(t, \bar x(t), p(t), u)$.
	\end{enumerate}
\end{proposition}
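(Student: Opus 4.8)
The plan is to obtain this unconstrained maximum principle as a specialization of the state-constrained Theorem~\ref{V_thm9.3.1 necessary condition}, by realizing $\mathcal M$ as a state-constrained problem whose constraint happens to be inactive at every point. Since the Proposition imposes no hypothesis on the constraint function $h$, I would first replace it by the artificial constant function $h(t,x)\equiv -1$. Because $h(t,x(t))=-1\leq 0$ holds for every arc $x\in W^{1,1}([t_0,T];\R^n)$ and every $t\in[t_0,T]$, this constraint is satisfied by all arcs, so imposing it alters neither the feasible set nor the cost; hence $(\bar x,\bar u)$ remains a $W^{1,1}$ local minimizer for the resulting problem. It then remains to verify the hypotheses of Theorem~\ref{V_thm9.3.1 necessary condition}: (H1) is identical; (H3) follows because local Lipschitz continuity of $g$ gives Lipschitz continuity on the closed ball $(\bar x(t_0),\bar x(T))+\delta\bar B$ after possibly shrinking $\delta$; and (H4) holds trivially for $h\equiv -1$, which is continuous, hence upper semicontinuous, and satisfies $\|h(t,x)-h(t,x')\|=0\leq K\|x-x'\|$ for any $K>0$. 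The whole point of the replacement is that it secures (H4) regardless of the original data.

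The decisive step is to show that the measure $\mu$ delivered by Theorem~\ref{V_thm9.3.1 necessary condition} must vanish. I would argue directly from the definition of the partial hybrid subdifferential (Definition~\ref{partial_hybrid subdiff}): the set $\partial^>_x h(t,\bar x(t))$ is the convex hull of limits of gradients $\nabla_x h(t_k,x_k)$ taken along sequences with $h(t_k,x_k)>0$. For $h\equiv -1$ one has $h(t_k,x_k)=-1$, so no admissible sequence exists, the generating set is empty, and therefore $\partial^>_x h(t,\bar x(t))=\emptyset$ for every $t\in[t_0,T]$. Condition~(i) of the theorem requires $\nu(t)\in\partial^>_x h(t,\bar x(t))$ for $\mu$-almost every $t$; were $\mu$ nonzero, this would force $\nu(t)$ into the empty set on a set of positive $\mu$-measure, which is impossible. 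Hence $\mu=0$.

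With $\mu=0$ the remaining deduction is immediate. The function $\eta$ from the theorem satisfies $\eta(t)=\int_{[t_0,t)}\nu(s)\,d\mu(s)=0$ for $t\in[t_0,T)$ and $\eta(T)=\int_{[t_0,T]}\nu(s)\,d\mu(s)=0$, so $q(t)=p(t)+\eta(t)=p(t)$ throughout. Substituting $q=p$, conditions~(ii), (iii), and~(iv) of Theorem~\ref{V_thm9.3.1 necessary condition} become exactly conditions~(i), (ii), and~(iii) of the Proposition, while the nontriviality $(p,\mu,\gamma)\neq(0,0,0)$ collapses to $(p,\gamma)\neq(0,0)$ since $\mu=0$.

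The step I expect to require the most care is reconciling hypothesis~(H2): the Proposition only asks that $\gph U$ be $\mathcal L\times\mathcal B^m$ measurable, whereas Theorem~\ref{V_thm9.3.1 necessary condition} as quoted requires a Borel graph, which is strictly stronger (the Borel $\sigma$-algebra of $[t_0,T]\times\R^m$ is contained in $\mathcal L\times\mathcal B^m$). This mismatch is the genuine obstacle in the ``apply Theorem~\ref{V_thm9.3.1 necessary condition}'' strategy, and I would handle it by noting that the measurability of $\gph U$ enters only through measurable-selection arguments for the velocity set, which remain valid under the weaker $\mathcal L\times\mathcal B^m$-measurability; this is precisely the refinement that underlies the independent statement \cite[Theorem~6.2.1]{Vinter_2000}.
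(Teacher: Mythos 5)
Your proposal matches the paper's own treatment: the paper presents this proposition as a direct specialization of Theorem~\ref{V_thm9.3.1 necessary condition} (citing Vinter's Theorem~6.2.1), and in Subsection~3.2.2 it carries out precisely your argument for the concrete problem $(FP_{1a})$ --- taking $h$ to be a constant nonpositive function, observing that $\partial^>_x h(t,\bar x(t))=\emptyset$ so that condition~(i) forces $\mu=0$, hence $q=p$ and conditions (ii)--(iv) collapse to (i)--(iii) of the Proposition. Your remark on the (H2) measurability mismatch is a fair caveat that the paper leaves implicit by simply citing Vinter's independent statement, but it does not affect the correctness of the reduction.
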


\subsection{Solution Existence in State Constrained Optimal Control}
To recall a solution existence theorem for optimal control problems with state constraints of the Mayer type, we will use the notations and concepts given in \cite[Section~9.2]{Cesari_1983}. Let $A$ be a subset of $\R\times \R^n$ and $U: A\rightrightarrows \R^m$ be a set-valued map defined on $A$. Let $$M:=\{(t, x, u)\in \R\times\R^n\times \R^m \;:\; (t, x)\in A,\ u \in U(t, x)\},$$ and $f=(f_1, f_2, \dots, f_n): M \to \R^n$ be a single-valued map defined on $M$. Let $B$ be a given subset of $\R\times \R^n\times\R\times \R^n$ and $g: B \to \R$ be a real function defined on $B$. Consider the optimal control problem of the Mayer type
\begin{equation}\label{cost functional_SET}
\mbox{Minimize}\ \; g(t_0, x(t_0), T,  x(T))
\end{equation} over $x \in W^{1,1}([t_0, T]; \R^n)$  and measurable functions $u:[t_0, T]~\to~\R^m$ satisfying
\begin{equation}\label{state control system_SET}
	\begin{cases}
		\dot x(t)=f(t, x(t), u(t)),\quad &\mbox{a.e.\ } t\in [t_0, T]\\
		(t, x(t))\in A, & \mbox{for all }t\in [t_0, T]\\
		(t_0, x(t_0), T, x(T))\in B\\
		u(t)\in U(t, x(t)), &\mbox{a.e.\ } t\in [t_0, T],
	\end{cases}
\end{equation}
where $[t_0, T]$ is a given interval. The problem \eqref{cost functional_SET}--\eqref{state control system_SET} will be denoted by $\mathcal{M}_1$.

A \textit{feasible process} for $\mathcal M_1$ is a pair of functions $(x, u)$ with $x: [t_0, T] \to \R^n$ being absolutely continuous on $[t_0, T]$, $u:[t_0, T] \to \R^m$ being measurable, such that  all the requirements in \eqref{state control system_SET} are satisfied. If  $(x, u)$ is a feasible process for $\mathcal M_1$, then $x$
is said to be a \textit{feasible trajectory}, and $u$ a \textit{feasible control function} for $\mathcal M_1$. The set of all feasible processes for $\mathcal M_1$ is denoted by $\Omega$.

Let $A_0=\big\{t\in\mathbb R\,:\, \exists x\in \mathbb R^n\ {\rm s.t.}\ (t,x)\in A\big\}$, i.e., $A_0$ is the projection of $A$ on the $t-$axis. Set $$A(t)=\big\{x\in \R^n \;:\; (t, x)\in A\big\}\quad\; (t\in A_0)$$ and  $$Q(t, x)=\big\{z\in \R^n \;:\; z=f(t, x, u),\ u \in U(t, x)\big\} \quad\; ((t, x)\in A).$$

The forthcoming statement is called \textit{Filippov's Existence Theorem for Mayer problems}.
\begin{theorem}[{see \cite[Theorem~9.2.i and Section~9.4]{Cesari_1983}}]\label{Filippov's_Existence_Thm}
Suppose that $\Omega$ is nonempty, $B$ is closed, $g$ is lower semicontinuous on $B$, $f$ is continuous on $M$ and, for almost every $t\in [t_0, T]$, the sets $Q(t, x)$, $x\in A(t)$, are convex. Moreover,  assume either that $A$ and $M$ are compact or that $A$ is not compact but closed and the following three conditions hold
\begin{enumerate}[\rm (a)]
\item For any $\varepsilon\geq 0$, the set $M_\varepsilon:=\{(t, x, u)\in M \;:\; \|x\| \leq \varepsilon\}$ is compact;
\item There is a compact subset $P$ of $A$ such that every feasible trajectory $x$ of $\mathcal M_1$ passes through at least one point of $P$;
\item There exists $c\geq 0$ such that 
\begin{equation*}
x_1 f_1(t, x, u) + x_2 f_2(t, x, u)+\dots+x_n f_n(t, x, u) \leq c (\|x\|^2+1)\quad\; \forall (t, x, u)\in M.
\end{equation*}
\end{enumerate}
Then, $\mathcal M_1$ has a $W^{1, 1}$ global minimizer.
\end{theorem}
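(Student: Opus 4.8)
The plan is to follow the classical direct method, adapted to the control setting as in Cesari's treatment. First I would choose a minimizing sequence of feasible processes $(x_k, u_k) \in \Omega$, so that the objective values $g(t_0, x_k(t_0), T, x_k(T))$ decrease to the infimum $\iota := \inf_{(x,u)\in\Omega} g(t_0, x(t_0), T, x(T))$; the a priori bounds obtained below, together with the lower semicontinuity of $g$, will show that $\iota$ is finite. The immediate goal is then to extract a limit trajectory $\ox$ and prove both that it is feasible and that it attains $\iota$.

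Next I would establish uniform bounds. Using growth condition (c), for each $k$ one has $\frac{d}{dt}\|x_k(t)\|^2 = 2\, x_k(t)\cdot f(t, x_k(t), u_k(t)) \le 2c\,(\|x_k(t)\|^2 + 1)$ a.e., so Gronwall's inequality bounds $\|x_k(t)\|^2$ on $[t_0,T]$ in terms of its value at any single point. By condition (b), every feasible trajectory meets the compact set $P$, which supplies exactly such a uniform anchor; hence there is $\varepsilon_0 > 0$ with $\|x_k(t)\| \le \varepsilon_0$ for all $t$ and all $k$. Condition (a) then places every triple $(t, x_k(t), u_k(t))$ in the compact set $M_{\varepsilon_0}$, on which the continuous map $f$ is bounded; thus $\|\dot x_k(t)\| = \|f(t, x_k(t), u_k(t))\|$ is uniformly bounded, so the arcs $x_k$ are uniformly bounded and equi-Lipschitz.

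By the Arzel\`a--Ascoli theorem, after passing to a subsequence $x_k \to \ox$ uniformly on $[t_0,T]$, with $\ox$ Lipschitz and hence absolutely continuous; and since $\{\dot x_k\}$ is bounded in $L^\infty \subset L^1$, the Dunford--Pettis theorem gives, along a further subsequence, $\dot x_k \rightharpoonup \dot{\ox}$ weakly in $L^1$. The decisive step is the \emph{closure property}: I would show $\dot{\ox}(t) \in Q(t, \ox(t))$ for almost every $t$. This is where the convexity of the velocity sets $Q(t,x)$ enters essentially. Mazur's lemma converts the weak convergence into strong $L^1$ convergence of suitable convex combinations of the $\dot x_k$; passing to an a.e.-convergent subsequence, the convexity and closedness of $Q(t, \ox(t))$ — the latter stemming from continuity of $f$, compactness of $M_{\varepsilon_0}$, and upper semicontinuity of $Q$ in $x$ — force the limit velocity to remain in $Q(t, \ox(t))$. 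Once this inclusion holds, Filippov's measurable implicit-function lemma furnishes a measurable control $\ou(t) \in U(t, \ox(t))$ with $\dot{\ox}(t) = f(t, \ox(t), \ou(t))$ a.e.

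It remains to confirm feasibility and optimality. The state constraint $(t, x_k(t)) \in A$ for all $t$ passes to the limit because $A$ is closed and the convergence is uniform, giving $(t, \ox(t)) \in A$ for all $t$; likewise $(t_0, x_k(t_0), T, x_k(T)) \in B$ with $B$ closed and $x_k(t_0) \to \ox(t_0)$, $x_k(T) \to \ox(T)$ yields $(t_0, \ox(t_0), T, \ox(T)) \in B$. Hence $(\ox, \ou) \in \Omega$. Finally, lower semicontinuity of $g$ on $B$ gives $g(t_0, \ox(t_0), T, \ox(T)) \le \liminf_k g(t_0, x_k(t_0), T, x_k(T)) = \iota$, so $(\ox, \ou)$ is a $W^{1,1}$ global minimizer. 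The main obstacle is the closure step: it is precisely the convexity hypothesis on $Q(t,x)$ that prevents oscillation in the controls from being lost in the weak limit, and making this rigorous — via Mazur's lemma together with an upper-semicontinuity / closed-graph argument for $Q$ — is the technical heart of the proof.
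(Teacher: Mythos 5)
The paper offers no proof of this statement: Theorem~\ref{Filippov's_Existence_Thm} is imported verbatim from Cesari's book (Theorem~9.2.i and Section~9.4) and is used as a black box, so there is no in-paper argument to compare against. Your sketch is the standard direct-method proof that underlies Cesari's result, and it is essentially correct: the Gronwall bound anchored at the compact set $P$ from (b), the equi-Lipschitz bound from (a) and continuity of $f$, Arzel\`a--Ascoli plus weak $L^1$ compactness of the derivatives, the lower closure step via convexity of $Q(t,x)$, Filippov's measurable selection to recover the control, and closedness of $A$ and $B$ with lower semicontinuity of $g$ to finish. The one place where your outline is thinner than what a complete proof requires is the closure step: after Mazur's lemma the convex combinations of the $\dot x_k(t)$ lie in the convex hull of $\bigcup_k Q(t,x_k(t))$, not in $Q(t,\ox(t))$ itself, so you need the sets $Q(t,\cdot)$ to satisfy Cesari's property (Q) at $\ox(t)$ (equality of $Q(t,\ox(t))$ with $\bigcap_{\delta>0}\clco\bigcup_{\|x-\ox(t)\|<\delta}Q(t,x)$); this does follow from convexity together with the compactness of $M_{\varepsilon_0}$ and continuity of $f$, but it is the precise point that must be verified rather than only the closedness and convexity of the single set $Q(t,\ox(t))$ that you invoke.
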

Clearly, condition (b) is satisfied if the initial point $(t_0, x(t_0))$ or the end point $(T, x(T))$ is fixed. As shown in \cite[p.~317]{Cesari_1983}, the following condition implies (c): 
\begin{enumerate}[\rm ($c_0$)]
\item \textit{There exists $c\geq 0$ such that $
\|f(t, x, u)\| \leq c (\|x\|+1)$ for all $(t, x, u)\in M$.}
\end{enumerate}

\section{Control Problems without State Constraints}\label{Example 1}

Denote by $(FP_1)$ the finite horizon optimal control problem of the Lagrange type
\begin{equation} \label{cost functional_FP_1}
\mbox{Minimize}\ \; J(x,u)=\int_{t_0}^{T} \big[-e^{-\lambda t}(x(t)+u(t))\big] dt
\end{equation}
over $x \in W^{1,1}([t_0, T], \R)$  and measurable function $u:[t_0, T] \to \R$ satisfying
\begin{equation} \label{state control system_FP_1}
\begin{cases}
\dot x(t)=-au(t),\quad &\mbox{a.e.\ } t\in [t_0, T]\\
x(t_0)=x_0\\
u(t)\in [-1, 1], &\mbox{a.e.\ } t\in [t_0, T],\\
\end{cases}
\end{equation}
with $a > \lambda>0$, $T>t_0\geq 0$, and  $x_0 \in \R$ being given.

\medskip
To treat  $(FP_1)$  in \eqref{cost functional_FP_1}--\eqref{state control system_FP_1} as a problem of the Mayer type, we set $x(t)=(x_1(t), x_2(t))$, where $x_1(t)$ plays the role of the state variable $x(t)$ in $(FP_1)$, and $$x_2(t):= \int_{t_0}^{t} \big[-e^{-\lambda t}(x_1(\tau)+u(\tau))\big] d\tau$$ for all $t\in [0, T]$. Then $(FP_1)$ is equivalent to the problem
\begin{equation} \label{cost functional_FP_1a}
\mbox{Minimize}\ \; x_2(T)
\end{equation}
over $x=(x_1, x_2) \in W^{1,1}([t_0, T], \R^2)$ and measurable functions $u:[t_0, T] \to \R$ satisfying
\begin{equation}\label{state control system_FP_1a}
\begin{cases}
\dot x_1(t)=-au(t),\quad &\mbox{a.e.\ } t\in [t_0, T]\\
\dot x_2(t)=-e^{-\lambda t}(x_1(t)+u(t)), &\mbox{a.e.\ } t\in [t_0, T]\\
(x(t_0), x(T))\in \{(x_0, 0)\}\times \R^2\\
u(t)\in [-1, 1], &\mbox{a.e.\ } t\in [t_0, T].\\
\end{cases}
\end{equation}
The problem \eqref{cost functional_FP_1a}--\eqref{state control system_FP_1a} is abbreviated to  $(FP_{1a})$. 

\subsection{Solution Existence}\label{SE_without_S_constraint}
Clearly, $(FP_{1a})$ is of the form $\mathcal{M}_1$ (see Subsection 2.3) with $n=2$, $m=1$, $A=[t_0, T]~\times~\R^2$, $U(t, x)=[-1, 1]$ for all $(t, x)\in A$, $B=\{t_0\}\times\{(x_0, 0)\}\times \R\times \R^2$, $g(t_0, x(t_0), T, x(T))=x_2(T)$, $M=A\times [-1, 1]$, $f(t, x, u)=(-au, -e^{-\lambda t}(x_1+u))$ for all $(t, x, u)\in M$. We are going to show that $(FP_{1a})$ satisfies all the assumptions of Theorem~\ref{Filippov's_Existence_Thm}.

Clearly, the pair $(x, u)$, where $u(t)=0$, $x_1(t)=x_0$, and $x_2(t)= -x_0\displaystyle\int_{t_0}^{t}e^{-\lambda \tau}d\tau$ for all $t\in [t_0, T]$, is a feasible process for $(FP_{1a})$. Thus, the set $\Omega$ of feasible processes is nonempty. Besides, $B$ is closed, $g$ is lower semicontinuous on $B$, $f$ is continuous on $M$. Moreover, by the formula for $A$, one has  $A_0=[t_0, T]$ and $A(t)=\R^2$ for all $t\in A_0$. In addition, from the formulas for $M$, $U$, and $f$, one gets
\begin{align*}
Q(t, x) &=\big\{z\in \R^2 \;:\; z=f(t, x, u),\ u \in U(t, x)\big\}\\
&=\big\{z\in \R^2 \;:\; z=(-au, -e^{-\lambda t}(x_1+u)),\ u \in [-1, 1]\big\}\\
&=\big\{(0, -e^{-\lambda t}x_1)\}+\{(-a, -e^{-\lambda t})u \;:\; u\in [-1, 1]\big\}
\end{align*}
for any $(t, x)\in A$. Thus, for every $t \in [t_0, T]$, the sets $Q(t, x)$, $x\in A(t)$, are line segments; hence they are convex. Since $A$ is closed, but not compact, we have to check the conditions (a)--(c) in Theorem~\ref{Filippov's_Existence_Thm}.

\textit{Condition} (a): For any $\varepsilon\geq 0$, since 
\begin{align*}
M_\varepsilon&=\{(t, x, u)\in M \;:\; \|x\| \leq \varepsilon\}\\
&=\{(t, x, u)\in[t_0, T]\times \R^2\times [-1,1] \;:\; \|x\| \leq \varepsilon\}\\
&= [t_0, T]\times \{x\in \R^2\;:\;\|x\| \leq \varepsilon\}\times [-1,1],
\end{align*}
one sees that $M_\varepsilon$ is compact.

\textit{Condition} (b): Obviously, $P:=\{t_0\}\times\{(x_0, 0)\}$ is a compact subset of $A$, and every feasible trajectory passes through the unique point of $P$. Thus, condition (b) is fulfilled. 

\textit{Condition} (c): Choosing $c=a+1$, we have
\begin{align*}
\|f(t, x, u)\|=\|(-au, -e^{-\lambda t}(x_1+u))& \leq a|u|+e^{-\lambda t}|x_1+u|\\
&\leq a+|x_1|+1\\ & \leq c(\|x\|+1)
\end{align*}
 for any $(t, x, u)\in M$, because $u\in [-1,1]$ and $e^{-\lambda t}\leq 1$ for $t\geq t_0 \geq 0$. Thus, condition ($c_0$), which implies (c), is satisfied.

By Theorem~\ref{Filippov's_Existence_Thm}, $(FP_{1a})$ has a $W^{1, 1}$ global minimizer. Therefore, $(FP_{1})$ has a $W^{1, 1}$ global minimizer by the equivalence of $(FP_{1a})$ and $(FP_{1})$.

\subsection{Necessary Optimality Conditions} 
To obtain necessary conditions  for $(FP_{1a})$, we note that $(FP_{1a})$ is in the form of $\mathcal{M}$ with  $g(x, y)=y_2$, $f(t, x, u)=(-au, -e^{-\lambda t}(x_1+u))$, $C=\{(x_0, 0)\}\times \R^2$, $U(t)=[-1, 1]$, and $h(t, x)=0$ for all $x=(x_1, x_2) \in \R^2$, $y=(y_1, y_2) \in \R^2$, $t\in [t_0, T]$, and $u\in \R$. Since $(FP_{1a})$ is an optimal control problem without state constraints, we can apply both  Proposition~\ref{V_thm6.2.1 necessary condition} Theorem~\ref{V_thm9.3.1 necessary condition} to this problem. In accordance with \eqref{Hamiltonian}, the Hamiltonian of $(FP_{1a})$ is given by 
\begin{equation}\label{Hamiltonian-FP1a}
\mathcal{H}(t, x, p, u)=-aup_1-e^{-\lambda t}(x_1+u)p_2 \quad \forall(t, x, p, u)\in [t_0, T]\times \R^2\times \R^2 \times \R,
\end{equation}
while by \eqref{h-partial subdiff} we have $\partial^>_x h(t, x)=\emptyset$ for all $(t, x)\in [t_0, T]\times \R^2$. Let $(\bar x, \bar u)$ be a $W^{1,1}$ local minimizer of $(FP_{1a})$. 
\subsubsection{Necessary Optimality Conditions for  $(FP_{1a})$ in Terms of Proposition~\ref{V_thm6.2.1 necessary condition}}\label{Sub1-FP1a}
It is clear that the assumptions (H1)--(H3) of Proposition~\ref{V_thm6.2.1 necessary condition} are satisfied for $(FP_{1a})$. So, there exist $p\in W^{1,1}([t_0, T]; \R^2)$ and $\gamma \geq 0$ such that $(p, \gamma)\neq (0, 0)$, and conditions~(i)--(iii) of Proposition~\ref{V_thm6.2.1 necessary condition} hold true. Let us analyze these conditions.

\textbf{Condition (i)}: By \eqref{Hamiltonian-FP1a}, $\mathcal{H}$ is differentiable in $x$ and $\partial_x\mathcal{H}(t, x, p, u)=\{(-e^{-\lambda t}p_2, 0)\}$ for all $(t, x, p, u)\in [t_0, T]\times \R^2\times \R^2 \times \R$. Thus, condition (i) implies that $\dot p_1(t) = e^{-\lambda t}p_2(t)$ for a.e. $t\in [t_0, T]$ and $p_2(t)$ is a constant function.

\textbf{Condition (ii)}: By the formulas for $g$ and $C$, we have $\partial g(\bar x(t_0), \bar x(T))=\{(0, 0, 0, 1)\}$ and $N_C(\bar x(t_0), \bar x(T))=\R^2\times\{(0, 0)\}$. Thus, condition (ii) implies that
$$(p(t_0), -p(T))\in \gamma \{(0, 0, 0, 1)\}+\R^2\times\{(0, 0)\};$$
hence $p_1(T)=0$ and $p_2(T)=-\gamma$. As $p_2(t)$ is a constant function, we have  $p_2(t)=-\gamma$ for all $t\in [t_0, T]$. So, the above analysis of condition (i) gives $p_1(t)=\dfrac{\gamma}{\lambda}\big(e^{-\lambda t}-e^{-\lambda T}\big)$ for all $t\in [t_0, T]$. Since  $(p, \gamma)\neq (0, 0)$, we must have $\gamma > 0$.

\textbf{Condition (iii)}: Due to \eqref{Hamiltonian-FP1a}, condition (iii) means that
\begin{equation*}
-a\bar u(t)p_1(t)-e^{-\lambda t}[x_1(t)+\bar u(t)]p_2(t)=\max_{u\in [-1, 1]}\left\{-aup_1(t)-e^{-\lambda t}[x_1(t)+u]p_2(t) \right\}
\end{equation*}
for a.e. $t\in [t_0, T]$.
Equivalently,
\begin{equation}\label{min_condition}
[ap_1(t)+e^{-\lambda t}p_2(t)]\bar u(t)=\min_{u\in [-1, 1]}\left\{[ap_1(t)+e^{-\lambda t}p_2(t)]u\right\} \quad \mbox{a.e. } t\in [t_0, T].
\end{equation}
Setting $\varphi(t):=ap_1(t)+e^{-\lambda t}p_2(t)$ for $t\in [t_0, T]$, we have 
\begin{align*}
\varphi(t) =a\dfrac{\gamma}{\lambda}\big(e^{-\lambda t}-e^{-\lambda T}\big)-\gamma e^{-\lambda t}=\gamma (\dfrac{a}{\lambda}-1)e^{-\lambda t}-\gamma\dfrac{a}{\lambda} e^{-\lambda T}
\end{align*}
for a.e. $t\in [t_0, T]$. As $\dfrac{a}{\lambda}>1$, we see that $\varphi$  is decreasing on $\R$. In addition, it is clear that $\varphi(T)=-\gamma e^{-\lambda T}<0$, and $\varphi(t)=0$ if and only if $t=\bar t$, where $\bar t:=T-\dfrac{1}{\lambda}\ln \dfrac{a}{a-\lambda}$. 

We have the following cases.

{\sc Case  A:} \textit{$t_0\geq \bar t$}. Then $\varphi(t)<0$ for all $t\in (t_0, T]$. Therefore, condition \eqref{min_condition} implies $\bar u(t)=1$ for all $t\in [t_0, T]$. Hence, by \eqref{state control system_FP_1a}, $\bar x_1(t)=x_0-a(t-t_0)$ for a.e. $t\in [t_0, T]$.

{\sc Case  B:} \textit{$t_0 < \bar t$}. Then $\varphi(t)>0$ for $t\in [t_0, \bar t)$ and $\varphi(t)<0$ for $t\in (\bar t, T]$. Thus, \eqref{min_condition} yields $\bar u(t)=-1$ for $t\in [t_0, \bar t)$ and $\bar u(t)=1$ for a.e. $t\in (\bar t, T]$; hence $\bar x_1(t)=x_0+a(t-t_0)$ for every $t\in [t_0, \bar t]$ and $\bar x_1(t)=x_0-a(t+t_0-2\bar t)$ for every $t\in (\bar t, T]$.

\subsubsection{Necessary Optimality Conditions for  $(FP_{1a})$ in Terms of Theorem~\ref{V_thm9.3.1 necessary condition}}
Since the  assumptions (H1)--(H4) of Theorem~\ref{V_thm9.3.1 necessary condition} are satisfied for $(FP_{1a})$, by that theorem one can find $p\in W^{1,1}([t_0, T]; \R^2)$, $\gamma \geq 0$, $\mu \in C^\oplus(t_0, T)$, and a Borel measurable  function $\nu:[t_0, T]\to  \R^2$ such that $(p, \mu, \gamma)\neq (0, 0, 0)$, and for $q(t):=p(t)+\eta(t)$ with $\eta:[t_0, T]\to  \R^2$ being given by $\eta(t):=
\displaystyle\int_{[t_0, t)}\nu(s)d\mu(s)$ if $t\in [t_0, T)$ and $\eta(T):=\displaystyle\int_{[t_0, T]}\nu(s)d\mu(s)$, conditions (i)--(iv) in Theorem~\ref{V_thm9.3.1 necessary condition} hold true. Since $\partial^>_x h(t, \bar x(t))=\emptyset$ for all $t\in [t_0, T]$, the inclusion $\nu(t)\in \partial^>_x h(t, \bar x(t))$ is violated at every $t\in [t_0,T]$. Hence, condition (i) forces $\mu=0$. We see that condition (iv) is fulfilled and the conditions (ii)--(iv) in  Theorem~\ref{V_thm9.3.1 necessary condition} recover the conditions (i)--(iii) of Proposition~\ref{V_thm6.2.1 necessary condition}.

\medskip
	Going back to the original problem $(FP_1)$, we can put the obtained results in the following theorem.

\begin{Theorem}\label{Thm1} Given any $a,\lambda$ with $a>\lambda>0$, define $\rho=\dfrac{1}{\lambda}\ln \dfrac{a}{a-\lambda}>0$ and  $\bar t=T-\rho$. Then, problem $(FP_1)$ has a unique local solution $(\bar x,\bar u)$, which is a global solution, where $\bar u(t)=-a^{-1}\dot{\bar x}(t)$ for almost everywhere $t\in [t_0, T]$ and $\bar x(t)$ can be described as follows:
	\begin{description}
		\item{\rm (a)} If $t_0 \geq \bar t$ (i.e., $T-t_0 \leq \rho$), then 
		\begin{equation*}
		\bar x(t)=x_0-a(t-t_0), \quad t \in [t_0, T].
		\end{equation*}
		\item{\rm (b)} If $t_0 < \bar t$ (i.e., $T-t_0 > \rho$), then 
		\begin{equation*}
	    \bar x(t)=
	    \begin{cases}
	    x_0+a(t-t_0), \quad & t \in [t_0, \bar t]\\
	    x_0-a(t+t_0-2\bar t), & t \in (\bar t, T].
	    \end{cases}
		\end{equation*}
	\end{description}
\end{Theorem}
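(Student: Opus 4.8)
The plan is to combine the two ingredients already assembled: the existence of a $W^{1,1}$ global minimizer, guaranteed for $(FP_{1a})$ by Filippov's theorem in Subsection~\ref{SE_without_S_constraint}, and the maximum-principle analysis, which shows that every $W^{1,1}$ local minimizer must satisfy necessary conditions that in fact single out exactly one candidate process in each regime. The governing idea is that necessary conditions by themselves only constrain local minimizers, but once existence is in hand, the unique candidate they produce must be the solution.

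First I would record what the adjoint analysis carried out via Proposition~\ref{V_thm6.2.1 necessary condition} and Theorem~\ref{V_thm9.3.1 necessary condition} has already established for an arbitrary $W^{1,1}$ local minimizer $(\bar x,\bar u)$ of $(FP_{1a})$: one necessarily has $\gamma>0$, and the switching function $\varphi(t)=ap_1(t)+e^{-\lambda t}p_2(t)$ is, up to the positive factor $\gamma$, the explicitly computed strictly decreasing function, which vanishes only at the single point $\bar t=T-\rho$. Since $\{\bar t\}$ is Lebesgue-null, the minimum condition~\eqref{min_condition} fixes $\bar u(t)$ for almost every $t$: $\bar u\equiv 1$ in Case~A, and $\bar u=-1$ on $[t_0,\bar t)$ with $\bar u=1$ on $(\bar t,T]$ in Case~B. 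Integrating $\dot{\bar x}_1=-a\bar u$ from $\bar x_1(t_0)=x_0$ then yields the asserted formulas for $\bar x_1$, and the dynamics for $\bar x_2$ determine the remaining coordinate. Hence in each regime there is exactly one process meeting the necessary conditions.

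Next I would close the loop with existence. Filippov's theorem supplies a $W^{1,1}$ global minimizer of $(FP_{1a})$; being a local minimizer, it satisfies the necessary conditions and therefore coincides with the unique candidate just isolated, so that candidate is a global solution. Conversely, any $W^{1,1}$ local minimizer obeys the same necessary conditions and hence equals the same candidate, giving simultaneously the uniqueness of the local minimizer and the fact that it is global. Passing from $(FP_{1a})$ back to $(FP_1)$ through the equivalence established earlier---under which $x=x_1$ and $\bar u=-a^{-1}\dot{\bar x}$ almost everywhere---produces the stated description of $\bar x$ in parts (a) and (b).

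The delicate point is the logical architecture rather than any computation: a maximum principle furnishes only necessary conditions, so the proof genuinely depends on the prior existence theorem to certify that the lone candidate produced is in fact optimal. Two smaller matters also deserve care---verifying that $\gamma$ cannot vanish (otherwise the adjoint equation together with the endpoint condition would force $p\equiv0$, contradicting $(p,\gamma)\neq(0,0)$), and that $\varphi$ has an isolated zero, so that the bang-bang control, and with it the absolutely continuous trajectory $\bar x$, is uniquely determined even though the problem is irregular in Phu's sense, the Hamiltonian being merely linear, not strictly convex, in the control.
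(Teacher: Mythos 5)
Your proposal is correct and follows essentially the same route as the paper: the paper's own proof of Theorem~\ref{Thm1} simply points back to the existence result of Subsection~\ref{SE_without_S_constraint} (Filippov) and the Case~A/Case~B analysis of the necessary conditions in Subsection~\ref{Sub1-FP1a}, which is precisely the ``existence plus a unique candidate from the maximum principle'' architecture you describe. Your added remarks on why $\gamma>0$ and why the switching function has an isolated zero are the same observations the paper makes in the course of that analysis, so nothing is missing or different in substance.
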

\begin{proof} The assertions (a) and (b) are straightforward from the results obtained in Case~A and Case~B of Subsection~\ref{Sub1-FP1a}, because $\bar x_1(t)$ in $(FP_{1a})$ coincides with $\bar x(t)$ in $(FP_1)$.
	  	\end{proof}
	  		  	
\section{Control Problems with Unilateral Constraints}\label{Example 2}

By $(FP_2)$ we denote the finite horizon optimal control problem of the Lagrange type
		\begin{equation} \label{cost functional_FP_2}
		\mbox{Minimize}\ \; J(x,u)=\int_{t_0}^{T} \big[-e^{-\lambda t}(x(t)+u(t))\big] dt
		\end{equation}
		over $x \in W^{1,1}([t_0, T], \R)$  and measurable functions $u:[t_0, T] \to\R$ satisfying
		\begin{equation} \label{state control system_FP_2}
		\begin{cases}
		\dot x(t)=-au(t),\quad &\mbox{a.e.\ } t\in [t_0, T]\\
		x(t_0)=x_0\\
		u(t)\in [-1, 1], &\mbox{a.e.\ } t\in [t_0, T]\\
		x(t)\leq 1, & \forall t\in [t_0, T]
		\end{cases}
		\end{equation}
		with $a > \lambda>0$, $T>t_0\geq 0$, and  $x_0 \leq 1$ being given.
		
		\medskip
		We transform this problem into one of the Mayer type by setting $x(t)=(x_1(t), x_2(t))$, where $x_1(t)$ plays the role of $x(t)$ in \eqref{cost functional_FP_2}--\eqref{state control system_FP_2} and \begin{equation}\label{phase_second_component}x_2(t):= \int_{t_0}^{t} \big[-e^{-\lambda\tau}(x_1(\tau)+u(\tau))\big] d\tau\end{equation} for all $t\in [0, T]$. Thus, $(FP_2)$ is equivalent to the problem
		\begin{equation}\label{cost functional_FP_2a}
		\mbox{Minimize}\ \; x_2(T)
		\end{equation}
		over $x=(x_1, x_2) \in W^{1,1}([t_0, T], \R^2)$  and measurable functions $u:[t_0, T] \to\R$ satisfying
		\begin{equation} \label{state control system_FP_2a}
		\begin{cases}
		\dot x_1(t)=-au(t),\quad &\mbox{a.e.\ } t\in [t_0, T]\\
		\dot x_2(t)=-e^{-\lambda t}(x_1(t)+u(t)), &\mbox{a.e.\ } t\in [t_0, T]\\
		(x(t_0), x(T))\in \{(x_0, 0)\}\times \R^2\\
		u(t)\in [-1, 1], &\mbox{a.e.\ } t\in [t_0, T]\\
		x_1(t)\leq 1, &\forall t\in [t_0, T].
		\end{cases}
		\end{equation} We denote problem \eqref{cost functional_FP_2a}--\eqref{state control system_FP_2a} by $(FP_{2a})$.
		
	\subsection{Solution Existence}\label{SE_1-sided_S_constraint}
	To check that $(FP_{2a})$ is of the form $\mathcal{M}_1$ (see Subsection 2.3), we choose  $n=2$, $m=1$, $A=[t_0, T]~\times(-\infty, 1]\times \R$, $U(t, x)=[-1, 1]$ for all $(t, x)\in A$, $B=\{t_0\}\times\{(x_0, 0)\}\times \R\times \R^2$, $g(t_0, x(t_0), T, x(T))=x_2(T)$, $M=A\times [-1, 1]$, $f(t, x, u)=(-au, -e^{-\lambda t}(x_1+u))$ for all $(t, x, u)\in M$. In comparison with the problem $(FP_{1a})$, the only change in this formulation of $(FP_{2a})$ is that we have $A=[t_0, T]~\times(-\infty, 1]\times \R$ instead of $A=[t_0, T]~\times \R^2$. Thus, to show that $(FP_{2a})$ satisfies all the assumptions of Theorem~\ref{Filippov's_Existence_Thm}, we can use the arguments in Subsection~\ref{SE_without_S_constraint}, except those related to the convexity of the sets $Q(t,x)$ and the compactness of $M_\varepsilon$, which have to be verified in a slightly different manner. 
		
		By the above formula for $A$, we have  $A_0=[t_0, T]$ and $A(t)=(-\infty, 1]\times \R$ for all $t\in A_0$. As in Subsection~\ref{SE_without_S_constraint}, we have \begin{align*}
		Q(t, x)=\big\{(0, -e^{-\lambda t}x_1)\}+\{(-a, -e^{-\lambda t})u \;:\; u\in [-1, 1]\big\}
		\end{align*}
		for any $(t, x)\in A$. Thus, the assumption of Theorem~\ref{Filippov's_Existence_Thm} on the convexity of the sets  $Q(t, x)$, $x\in A(t)$, for almost every $t\in [t_0, T]$, is satisfied. Since $M=[t_0, T]~\times(-\infty, 1]\times \R\times [-1, 1]$, for any $\varepsilon\geq 0$, one has
		\begin{align*}
		M_\varepsilon&=\{(t, x, u)\in M \;:\; \|x\| \leq \varepsilon\}\\
		&=\{(t, x, u)\in[t_0, T]~\times(-\infty, 1]\times \R\times [-1, 1] \;:\; \|x\| \leq \varepsilon\}.
		\end{align*}
		As $M_\varepsilon$ is closed and contained in the compact set $[t_0, T]\times \{x\in \R^2\;:\;\|x\| \leq \varepsilon\}\times [-1,1]$, it is compact.
		
		It follows from Theorem~\ref{Filippov's_Existence_Thm} that $(FP_{2a})$ has a $W^{1, 1}$ global minimizer. Therefore, by the equivalence of~$(FP_{2})$ and $(FP_{2a})$, we can assert that $(FP_{2})$ has a $W^{1, 1}$ global minimizer.
		
		\subsection{Necessary Optimality Conditions}\label{Subsection4.2} 
		
	In order to apply Theorem~\ref{V_thm9.3.1 necessary condition} for solving $(FP_2)$, we observe that $(FP_{2a})$ is in the form of~$\mathcal M$ with  $g(x, y)=y_2$, $f(t, x, u)=(-au, -e^{-\lambda t}(x_1+u)),$ $C=\{(x_0, 0)\}\times \R^2$, $U(t)=[-1, 1]$, and $h(t, x)=x_1-1$ for all $t\in [t_0, T]$, $x=(x_1, x_2) \in \R^2$, $y=(y_1, y_2) \in \R^2$ and $u\in \R$. 
		
		\medskip
		The forthcoming two propositions describe a fundamental properties of the local minimizers of the problem $(FP_{2a})$, which is obtained from the optimal control problem of the Lagrange type $(FP_{2})$ by introducing the artificial variable $x_2$. Similar statements as those in the first proposition are valid for any optimal control problem of the Mayer type, which is  obtained from an optimal control problem of the Lagrange type in the same manner. While, the claims in the second proposition hold true for every optimal control problem of the Mayer type, whose objective function does not depend on the initial point.
		
		\begin{Proposition}\label{lemma_basic_property_1}
		Suppose that $(\bar x, \bar u)$ is a $W^{1,1}$ local minimizer for $(FP_{2a})$. Then, for any $\tau_1,\tau_2\in [t_0,T]$ with $\tau_1<\tau_2$, the restriction of $(\bar x, \bar u)$ on $[\tau_1,\tau_2]$, i.e., the process $(\bar x(t), \bar u(t))$ with $t\in [\tau_1,\tau_2]$, is  a $W^{1,1}$ local minimizer for the following  optimal control problem of the Mayer type
		\begin{equation*}\label{cost functional_FP_2aR}
	    {\rm Minimize}\ \; x_2(\tau_2)
		\end{equation*}
		{\rm over $x=(x_1, x_2) \in W^{1,1}([\tau_1, \tau_2], \R^2)$  and measurable functions $u:[\tau_1, \tau_2] \to\R$ satisfying
		\begin{equation*}\label{state control system_FP_2aR}
		\begin{cases}
		\dot x_1(t)=-au(t),\quad &\mbox{a.e.\ } t\in [\tau_1,\tau_2]\\
		\dot x_2(t)=-e^{-\lambda t}(x_1(t)+u(t)),&\mbox{a.e.\ } t\in [\tau_1,\tau_2]\\
		(x(\tau_1), x(\tau_2))\in \{(\bar x_1(\tau_1),\bar x_2(\tau_1))\}\times \{\bar x_1(\tau_2)\}\times \R\\
		u(t)\in [-1, 1], &\mbox{a.e.\ } t\in [\tau_1, \tau_2]\\
		x_1(t)\leq 1, & \forall t\in [\tau_1, \tau_2],
		\end{cases}
		\end{equation*}}which is denoted by $(FP_{2a})|_{[\tau_1, \tau_2]}$. In another words, for any $\tau_1,\tau_2\in [t_0,T]$ with $\tau_1<\tau_2$, the restriction of a $W^{1,1}$ local minimizer for $(FP_{2a})$ on the time segment $[\tau_1,\tau_2]$ is a $W^{1,1}$ local minimizer for the Mayer problem $(FP_{2a})|_{[\tau_1, \tau_2]}$, which is obtained from $(FP_{2a})$ by replacing $t_0$ with $\tau_1$, $T$ with $\tau_2$, and $C$ with $\widetilde C:=\{(\bar x_1(\tau_1),\bar x_2(\tau_1))\}\times \{\bar x_1(\tau_2)\}\times \R$.
		\end{Proposition}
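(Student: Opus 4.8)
The plan is to establish the minimality inequality directly, by a concatenation (gluing) construction that exploits the fact that the Mayer cost $x_2(\tau_2)$ of $(FP_{2a})|_{[\tau_1,\tau_2]}$ is driven by the same integrand that produces the artificial variable $x_2$ in $(FP_{2a})$. Let $\delta>0$ be the radius furnished by Definition~\ref{local_minimizer} for the local minimizer $(\bar x,\bar u)$ of $(FP_{2a})$. I claim that the very same $\delta$ certifies local minimality of the restriction for the subproblem. To this end I would take an arbitrary feasible process $(y,v)$ for $(FP_{2a})|_{[\tau_1,\tau_2]}$, with $y=(y_1,y_2)$, satisfying $\|y-\bar x|_{[\tau_1,\tau_2]}\|_{W^{1,1}([\tau_1,\tau_2])}\le\delta$, and show that $\bar x_2(\tau_2)\le y_2(\tau_2)$.

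The construction is to define a competitor $(\hat x,\hat u)$ for the full problem on $[t_0,T]$ by setting $(\hat x,\hat u)=(\bar x,\bar u)$ on $[t_0,\tau_1]$, $(\hat x,\hat u)=(y,v)$ on $[\tau_1,\tau_2]$, and $\hat u=\bar u$ on $[\tau_2,T]$ with $\hat x$ solving the state equation from the value $\hat x(\tau_2)=(\bar x_1(\tau_2),y_2(\tau_2))$. The boundary data imposed in $(FP_{2a})|_{[\tau_1,\tau_2]}$ — namely $y(\tau_1)=(\bar x_1(\tau_1),\bar x_2(\tau_1))=\bar x(\tau_1)$ and $y_1(\tau_2)=\bar x_1(\tau_2)$ — guarantee that both components of $\hat x$ match across the junction points $\tau_1,\tau_2$, so $\hat x$ is absolutely continuous on $[t_0,T]$. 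Feasibility of $(\hat x,\hat u)$ for $(FP_{2a})$ is then immediate: the control bound $\hat u\in[-1,1]$ and the state constraint $\hat x_1\le 1$ hold piecewise (from feasibility of $(\bar x,\bar u)$ on the two outer intervals and of $(y,v)$ on the middle one), the endpoint constraint holds since $\hat x(t_0)=(x_0,0)$ with the terminal point free in $\R^2$, and the dynamics hold a.e. by construction.

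The key computation is that the gluing preserves the $W^{1,1}$ distance exactly. On $[t_0,\tau_1]$ one has $\hat x=\bar x$; on $[\tau_2,T]$ the first components coincide, $\hat x_1=\bar x_1$ (same dynamics $\dot x_1=-au$ and matching value $\hat x_1(\tau_2)=\bar x_1(\tau_2)$), whence $\dot{\hat x}_2=-e^{-\lambda t}(\hat x_1+\hat u)=\dot{\bar x}_2$ there as well. Thus only $[\tau_1,\tau_2]$ contributes to the norm, and since the initial-value term cancels ($\hat x(t_0)=\bar x(t_0)$ and $y(\tau_1)=\bar x(\tau_1)$), I obtain
\[
\|\hat x-\bar x\|_{W^{1,1}([t_0,T])}=\int_{\tau_1}^{\tau_2}\|\dot y(t)-\dot{\bar x}(t)\|\,dt=\|y-\bar x|_{[\tau_1,\tau_2]}\|_{W^{1,1}([\tau_1,\tau_2])}\le\delta .
\]
Hence $(\hat x,\hat u)$ lies within the admissible $\delta$-neighbourhood of $(\bar x,\bar u)$.

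Finally I would read off the cost comparison. On $[\tau_2,T]$ the trajectories $\hat x$ and $\bar x$ differ only by the constant offset $y_2(\tau_2)-\bar x_2(\tau_2)$ in the second component, so $\hat x_2(T)=\bar x_2(T)+[y_2(\tau_2)-\bar x_2(\tau_2)]$. Local minimality of $(\bar x,\bar u)$ for $(FP_{2a})$ then yields $\bar x_2(T)\le\hat x_2(T)$, i.e. $\bar x_2(\tau_2)\le y_2(\tau_2)$, which is exactly the desired inequality for the restricted objective. As $(y,v)$ was arbitrary within distance $\delta$, the restriction of $(\bar x,\bar u)$ is a $W^{1,1}$ local minimizer for $(FP_{2a})|_{[\tau_1,\tau_2]}$. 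The one point requiring care — and the crux of the argument — is the continuation on $[\tau_2,T]$: one must keep $\dot{\hat x}_2=\dot{\bar x}_2$ rather than $\hat x_2=\bar x_2$, so that the derivative-based $W^{1,1}$ norm registers no contribution on that interval despite the constant offset in $x_2$. This is precisely what turns the distance estimate into an equality and allows the same $\delta$ to transfer to the subproblem.
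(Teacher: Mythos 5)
Your proof is correct and takes essentially the same route as the paper: the same gluing construction, with the competitor spliced in on $[\tau_1,\tau_2]$ and the trajectory continued on $[\tau_2,T]$ using $\bar u$ so that $\dot{\hat x}_2=\dot{\bar x}_2$ there, which is exactly the paper's choice $\widetilde x_2(t)=x_2(\tau_2)+\int_{\tau_2}^{t}\bigl[-e^{-\lambda\tau}(\bar x_1(\tau)+\bar u(\tau))\bigr]\,d\tau$. The cost-offset cancellation and the transfer of the same $\delta$ via the exact $W^{1,1}$-distance identity match the paper's argument.
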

		\begin{proof}
			Since $(\bar x, \bar u)$ is a $W^{1,1}$ local minimizer for $(FP_{2a})$, by Definition \ref{local_minimizer} there exists $\delta>0$ such that the process $(\bar x, \bar u)$ minimizes the quantity $g(x(t_0), x(T))=x_2(T)$ over all feasible processes $(x, u)$ of $(FP_{2a})$ with $\|\bar x-x\|_{W^{1,1}} \leq \delta$. 
			
			Clearly, the restriction of $(\bar x, \bar u)$ on $[\tau_1,\tau_2]$ satisfies the conditions given in \eqref{cost functional_FP_2aR}. Thus, it is a feasible process for  $(FP_{2a})|_{[\tau_1, \tau_2]}$.
			
			Let $(x(t), u(t))$, $t\in [\tau_1, \tau_2]$, be an arbitrary feasible process of $(FP_{2a})|_{[\tau_1, \tau_2]}$ satisfying $$\|\bar x-x\|_{W^{1,1}([\tau_1, \tau_2],\R^2)} \leq\delta.$$ Consider the pair of functions $(\widetilde x,\widetilde u)$, where $\widetilde x=(\widetilde x_1,\widetilde x_2)$, which is given by
			\begin{align*}
			\widetilde x_1(t):=
			\begin{cases}
			\bar x_1(t), \   & t\in [t_0,\tau_1]\cup [\tau_2,T]\\
			x_1(t),  & t\in (\tau_1, \tau_2),
			\end{cases}
				\end{align*}
				\begin{align*}
				\widetilde x_2(t):=
				\begin{cases}
				\bar x_2(t), \  & t\in [t_0,\tau_1]\\
				x_2(t), & t\in (\tau_1, \tau_2)\\
					x_2(\tau_2)+\displaystyle\int_{\tau_2}^{t} \big[-e^{-\lambda\tau}(\bar x_1(\tau)+\bar u(\tau))\big] d\tau , & t\in [\tau_2,T],
				\end{cases}
				\end{align*}
				and
					\begin{align*}
			\widetilde u(t):=
			\begin{cases}
			\bar u(t), \  & t\in [t_0,\tau_1]\cup [\tau_2,T]\\
				u(t),	& t\in (\tau_1, \tau_2).
			\end{cases}
			\end{align*}
			Clearly, $(\widetilde x,\widetilde u)$ is a feasible process of $(FP_{2a})$ satisfying  $\|\bar x-\widetilde x\|_{W^{1,1}([t_0, T],\R^2)} \leq \delta$. Thus, one must have $g(\widetilde x(T))\geq g(\bar x(T))$ or, equivalently,
			\begin{eqnarray*}
		 x_2(\tau_2)+\displaystyle\int_{\tau_2}^{T} \omega(\tau) d\tau 
			\geq \bar x(\tau_2)  + \displaystyle\int_{\tau_2}^{T} \omega(\tau) d\tau  ,
			\end{eqnarray*}	where $\omega(\tau):=-e^{-\lambda\tau}(\bar x_1(\tau)+\bar u(\tau))$. Hence, one obtains the inequality $x_2(\tau_2)\geq \bar x(\tau_2)$ proving that  the restriction of $(\bar x, \bar u)$ on $[\tau_1,\tau_2]$ is  a $W^{1,1}$  local minimizer for $(FP_{2a})|_{[\tau_1, \tau_2]}$.
		\end{proof}
		\begin{Proposition}\label{lemma_basic_property_2}
		Suppose that $(\bar x, \bar u)$ is a $W^{1,1}$ local minimizer for $(FP_{2a})$. Then, for any $\tau_1\in [t_0,T)$, the restriction of the process $(\bar x, \bar u)$ on the time segment $[\tau_1, T]$, i.e., the process $(\bar x(t), \bar u(t))$ with $t\in [\tau_1, T]$, is  a $W^{1,1}$ local minimizer for the following  optimal control problem of the Mayer type
				\begin{equation*}\label{cost functional_FP_2b}
			    {\rm Minimize}\ \; x_2(T)
				\end{equation*}
				{\rm over $x=(x_1, x_2) \in W^{1,1}([\tau_1, T], \R^2)$  and measurable functions $u:[\tau_1, T] \to\R$ satisfying
				\begin{equation*}\label{state control system_FP_2b}
				\begin{cases}
				\dot x_1(t)=-au(t),\quad &\mbox{a.e.\ } t\in [\tau_1,T]\\
				\dot x_2(t)=-e^{-\lambda t}(x_1(t)+u(t)),&\mbox{a.e.\ } t\in [\tau_1,T]\\
				(x(\tau_1), x(T))\in \{(\bar x_1(\tau_1),\bar x_2(\tau_1))\}\times \R^2\\
				u(t)\in [-1, 1], &\mbox{a.e.\ } t\in [\tau_1, T]\\
				x_1(t)\leq 1, & \forall t\in [\tau_1, T],
				\end{cases}
				\end{equation*}}which is denoted by $(FP_{2b})$. In another words, for any $\tau_1\in [t_0,T)$, the restriction of a $W^{1,1}$ local minimizer for $(FP_{2a})$ on the time segment $[\tau_1, T]$	is a $W^{1,1}$ local minimizer for the Mayer problem $(FP_{2b})$, which is obtained from $(FP_{2a})$ by replacing $t_0$ with $\tau_1$.
		\end{Proposition}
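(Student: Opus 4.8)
The plan is to mirror the gluing argument used in the proof of Proposition~\ref{lemma_basic_property_1}, but the situation here is simpler because the two trajectories are to be spliced together at only one point, namely $\tau_1$, rather than at two endpoints. First I would invoke Definition~\ref{local_minimizer} to fix the radius $\delta>0$ for which $(\bar x,\bar u)$ minimizes $x_2(T)$ over all feasible processes of $(FP_{2a})$ lying in the $\delta$-ball, in the $W^{1,1}([t_0,T],\R^2)$ norm, around $\bar x$. It is immediate that the restriction of $(\bar x,\bar u)$ to $[\tau_1,T]$ satisfies all the constraints listed in $(FP_{2b})$, so it is a feasible process for $(FP_{2b})$.

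Next I would take an arbitrary feasible process $(x,u)$ of $(FP_{2b})$ with $\|\bar x-x\|_{W^{1,1}([\tau_1,T],\R^2)}\le\delta$ and build a competitor $(\widetilde x,\widetilde u)$ for $(FP_{2a})$ on the full interval $[t_0,T]$ by setting $\widetilde x(t)=\bar x(t)$, $\widetilde u(t)=\bar u(t)$ for $t\in[t_0,\tau_1]$, and $\widetilde x(t)=x(t)$, $\widetilde u(t)=u(t)$ for $t\in(\tau_1,T]$. The endpoint constraint of $(FP_{2b})$ forces $x(\tau_1)=(\bar x_1(\tau_1),\bar x_2(\tau_1))=\bar x(\tau_1)$, so the two pieces agree at $\tau_1$ and $\widetilde x$ is absolutely continuous; no correction term for $x_2$ is needed here, in contrast to the proof of Proposition~\ref{lemma_basic_property_1}, where the gluing at the right endpoint required one. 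Checking that $(\widetilde x,\widetilde u)$ is feasible for $(FP_{2a})$ is then routine: the dynamics and the bounds $u(t)\in[-1,1]$ and $x_1(t)\le 1$ hold piecewise, $\widetilde x(t_0)=\bar x(t_0)=(x_0,0)$, and the terminal endpoint is free.

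The key quantitative step is the norm estimate. Because $\widetilde x\equiv\bar x$ on $[t_0,\tau_1]$, the initial-value term and the integral of $\|\dot{\bar x}-\dot{\widetilde x}\|$ over $[t_0,\tau_1]$ both vanish, so $\|\bar x-\widetilde x\|_{W^{1,1}([t_0,T],\R^2)}=\int_{\tau_1}^T\|\dot{\bar x}(t)-\dot x(t)\|\,dt$. On the other hand, since $x(\tau_1)=\bar x(\tau_1)$, the initial-value term of $\|\bar x-x\|_{W^{1,1}([\tau_1,T],\R^2)}$ also vanishes, so this latter norm equals the very same integral. Hence $\|\bar x-\widetilde x\|_{W^{1,1}([t_0,T],\R^2)}=\|\bar x-x\|_{W^{1,1}([\tau_1,T],\R^2)}\le\delta$, and the optimality of $(\bar x,\bar u)$ yields $\widetilde x_2(T)\ge\bar x_2(T)$. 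Since $\widetilde x_2(T)=x_2(T)$, this reads $x_2(T)\ge\bar x_2(T)$, which is precisely the local minimality of the restriction of $(\bar x,\bar u)$ to $[\tau_1,T]$ for $(FP_{2b})$.

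I do not anticipate a genuine obstacle, since the argument is a streamlined adaptation of Proposition~\ref{lemma_basic_property_1}. The only point that must be handled with care is confirming that the single-point gluing at $\tau_1$ keeps $\widetilde x_2$ continuous and leaves the objective value unchanged; this works precisely because the endpoint constraint of $(FP_{2b})$ pins $x(\tau_1)$ to $\bar x(\tau_1)$, so that no interface discontinuity in $x_2$ arises and the terminal value $\widetilde x_2(T)$ coincides with $x_2(T)$.
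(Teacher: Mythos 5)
Your proposal is correct and follows essentially the same gluing argument as the paper's own proof: restrict, splice the competitor onto $\bar x$ at $\tau_1$ (where the endpoint constraint of $(FP_{2b})$ pins $x(\tau_1)=\bar x(\tau_1)$), and transfer the $\delta$-ball optimality back. Your explicit verification that the two $W^{1,1}$ norms coincide is a detail the paper leaves as ``clearly,'' but the route is the same.
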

		\begin{proof} For a fixed $\tau_1\in [t_0,T)$, let $(FP_{2b})$ be defined as in the formulation of the lemma. It is clear that the process $(\bar x(t), \bar u(t))$, $t\in [\tau_1, T]$, is feasible for $(FP_{2b})$. Since $(\bar x, \bar u)$ is a $W^{1,1}$ local minimizer of $(FP_{2a})$, by Definition \ref{local_minimizer} there exists $\delta>0$ such that the process $(\bar x, \bar u)$ minimizes the quantity $g(x(t_0), x(T))=x_2(T)$ over all feasible processes $(x, u)$ of $(FP_{2a})$ with $\|\bar x-x\|_{W^{1,1}} \leq \delta$. Let $(x(t), u(t))$, $t\in [\tau_1, T]$, be an arbitrary feasible process of $(FP_{2b})$ satisfying $\|\bar x-x\|_{W^{1,1}([\tau_1, T])} \leq \delta$. Consider the pair of functions $(\widetilde x,\widetilde u)$ given by
		 				\begin{align*}
		 					\widetilde x(t):=
		 				\begin{cases}
		 			 \bar x(t), \quad t\in [t_0, \tau_1)\\
		 				 x(t), \quad t\in [\tau_1, T]
		 				\end{cases}
		 				\quad\; \mbox{and} \quad\;
		 				\widetilde u(t):=
		 				\begin{cases}
		 				 \bar u(t), \quad t\in [t_0, \tau_1)\\
		 				 u(t), \quad t\in [\tau_1, T].
		 				\end{cases}
		 				\end{align*}
		 		Clearly, $(\widetilde x,\widetilde u)$ is a feasible process of $(FP_{2a})$ satisfying  $\|\bar x-\widetilde x\|_{W^{1,1}([t_0, T])} \leq \delta$. Thus, one must have $g(\widetilde x(T))\geq g(\bar x(T))$. Since $\widetilde x(T)=x(T) $, one obtains the inequality $g(x(T))\geq g(\bar x(T))$, which justifies the assertion of the proposition.  
		\end{proof}
	 		
		In accordance with \eqref{Hamiltonian}, the Hamiltonian of $(FP_{2a})$ is given by 
		\begin{equation}\label{Hamiltonian_FP_2a}
		\mathcal{H}(t, x, p, u)=-aup_1-e^{-\lambda t}(x_1+u)p_2 \quad \forall(t, x, p, u)\in [t_0, T]\times \R^2\times \R^2 \times \R.
		\end{equation}
		By \eqref{h-partial subdiff}, the partial hybrid subdifferential of $h$ at $(t, x)\in [t_0, T]\times \R^2$ is given by
		\begin{equation}\label{hybrid subdiff_FP_2a}
		\partial^>_x h(t, x)=
		\begin{cases}
		\emptyset, &\quad \mbox{if} \ x_1<1\\
		\{(1,0)\}, & \quad \mbox{if} \ x_1\geq 1.
		\end{cases}
		\end{equation}
		
	\textit{From now on, let $(\bar x, \bar u)$ be a $W^{1,1}$ local minimizer for $(FP_{2a})$}. 
	
	Since the  assumptions (H1)--(H4) of Theorem~\ref{V_thm9.3.1 necessary condition} are satisfied for $(FP_{2a})$, by that theorem one can find $p\in W^{1,1}([t_0, T]; \R^2)$, $\gamma \geq 0$, $\mu \in C^\oplus(t_0, T)$, and a Borel measurable  function $\nu:[t_0, T]\to  \R^2$ such that $(p, \mu, \gamma)\neq (0, 0, 0)$, and for $q(t):=p(t)+\eta(t)$ with \begin{equation}\label{1st_formula_for_eta}\eta(t):=
		\displaystyle\int_{[t_0, t)}\nu(\tau)d\mu(\tau)\quad\ (\forall t\in [t_0, T))\end{equation} and
		 \begin{equation}\label{2nd_formula_for_eta}
		 \eta(T):=\displaystyle\int_{[t_0, T]}\nu(\tau)d\mu(\tau),
		 \end{equation}
		 conditions (i)--(iv) in Theorem~\ref{V_thm9.3.1 necessary condition} hold true.
		
		\textbf{Condition (i)}: Note that
		\begin{align*}
		&  \mu \{t\in [t_0, T] \,:\, \nu(t) \notin \partial^>_x h(t, \bar x(t))\}\\ &= \mu \{t\in [t_0, T] \,:\, \partial^>_x h(t, \bar x(t))=\emptyset\}+\mu \{t\in [t_0, T] \,:\, \partial^>_x h(t, \bar x(t))\neq\emptyset,\; \nu(t) \notin \partial^>_x h(t, \bar x(t))\}.
		\end{align*}
		Since $\bar x_1(t)\leq 1$ for every $t$, combining this with \eqref{hybrid subdiff_FP_2a} gives
		\begin{align*}
		& \mu \{t\in [t_0, T] \,:\, \nu(t) \notin \partial^>_x h(t, \bar x(t))\}\\ &= \mu \{t\in [t_0, T] \,:\,\bar x_1(t)<1\} + \mu \{t\in [t_0, T] \,:\, \bar x_1(t)=1,\; \nu(t) \neq (1, 0)\}.
		\end{align*}
	So, from (i)  it follows that 
		\begin{equation}\label{1st_condition_for_mu}\mu \{t\in [t_0, T] \,:\,\bar x_1(t)<1\}=0\end{equation} and \begin{equation}\label{2nd_condition_for_mu}\mu \big\{t\in [t_0, T] \,:\, \bar x_1(t)=1,\; \nu(t) \neq (1, 0)\big\}=0.\end{equation}
		
		\textbf{Condition (ii)}: By \eqref{Hamiltonian_FP_2a}, $\mathcal{H}$ is differentiable in $x$ and $\partial_x\mathcal{H}(t, x, p, u)=\{(-e^{-\lambda t}p_2, 0)\}$ for all $(t, x, p, u)\in [t_0, T]\times \R^2\times \R^2 \times \R$. Thus, (ii) implies that $-\dot p(t) =(-e^{-\lambda t}q_2(t), 0)$ for a.e. $t\in [t_0, T]$. Hence, $\dot p_1(t) = e^{-\lambda t}q_2(t)$  for a.e. $t\in [t_0, T]$ and $p_2(t)$ is  a constant for all $t\in [t_0, T]$.
		
		\textbf{Condition (iii)}: By the formulas for $g$ and $C$, $\partial g(\bar x(t_0), \bar x(T))=\{(0, 0, 0, 1)\}$ and $N_C(\bar x(t_0), \bar x(T))=\R^2\times\{(0, 0)\}$. Thus, (iii) yields
		$$(p(t_0), -q(T))\in \{(0, 0, 0, \gamma)\}+\R^2\times\{(0, 0)\},$$ which means that
		 $q_1(T)=0$ and $q_2(T)=-\gamma$.
		
		\textbf{Condition (iv)}: By \eqref{Hamiltonian_FP_2a}, from (iv) one gets
		\begin{equation*}
		-a\bar u(t)q_1(t)-e^{-\lambda t}[\bar x_1(t)+\bar u(t)]q_2(t)=\max_{u\in [-1, 1]}\left\{-auq_1(t)-e^{-\lambda t}[\bar x_1(t)+u]q_2(t) \right\}\ \mbox{a.e.}\; t\in [t_0,T]
		\end{equation*}
		or, equivalently,
		\begin{equation}\label{min_condition_FP_2a}
		[aq_1(t)+e^{-\lambda t}q_2(t)]\bar u(t)=\min_{u\in [-1, 1]}\left\{[aq_1(t)+e^{-\lambda t}q_2(t)]u\right\}\ \mbox{a.e.}\; t\in [t_0,T].
		\end{equation}
		
		Thanks to Proposition~\ref{lemma_basic_property_1} and the above analysis of Conditions (i)--(iv), we will be able to prove next statement.
		
			\begin{Proposition}\label{lemma_interior_traj}
				Suppose that $[\tau_1, \tau_2]$ is a subsegment of $[t_0, T]$ with $h (t, \bar x(t))<0$ for all $t\in [\tau_1, \tau_2]$. Then, the curve $t\mapsto \bar x_1(t)$, $t\in [\tau_1, \tau_2]$, cannot have more than one turning point. To be more precise, the curve must be of one of the following three categories {\rm C1$-$C3}:
				\begin{equation}\label{lemma_interior_traj_1}
					\bar x_1(t)=\bar x_1(\tau_1)+a(t-\tau_1), \quad t\in [\tau_1, \tau_2],
				\end{equation}
				\begin{equation}\label{lemma_interior_traj_2}
					\bar x_1(t)=\bar x_1(\tau_1)-a(t-\tau_1), \quad t\in [\tau_1, \tau_2],
				\end{equation}
				and
				\begin{equation} \label{lemma_interior_traj_3}
					\bar x_1(t)=
					\begin{cases}
						\bar x_1(\tau_1)+a(t-\tau_1), \quad & t\in [\tau_1, t_{\zeta}]\\
						\bar x_1(t_{\zeta})-a(t-t_\zeta), & t\in (t_{\zeta}, \tau_2],
					\end{cases}
				\end{equation} where  $t_{\zeta}$ is a certain point in $(\tau_1, \tau_2)$ (see {\rm Fig.~1--3}).		
				\begin{figure}[!ht]
					\begin{minipage}[b]{0.5\textwidth}
						\centering
						\includegraphics[scale=.15]{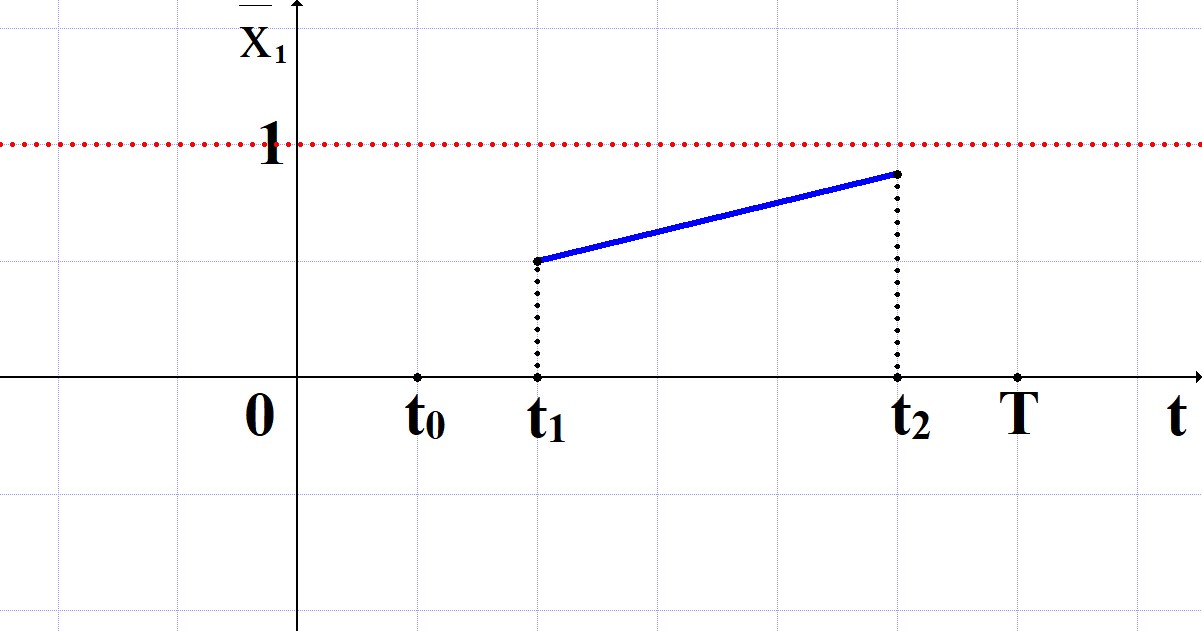}
						\caption{Category C1} 
					\end{minipage}
					\hfill
					\begin{minipage}[b]{0.5\textwidth}
						\centering
						\includegraphics[scale=.15]{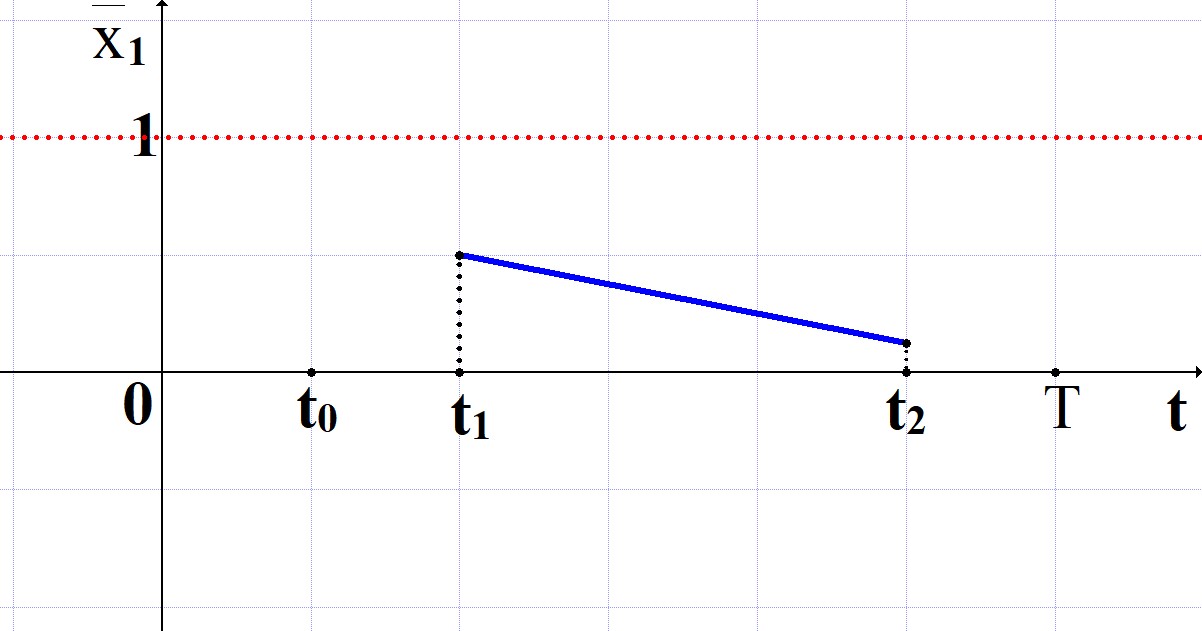}
						\caption{Category C2} 
					\end{minipage}
				\end{figure}
				\begin{figure}[!ht]
					\centering
					\includegraphics[scale=.15]{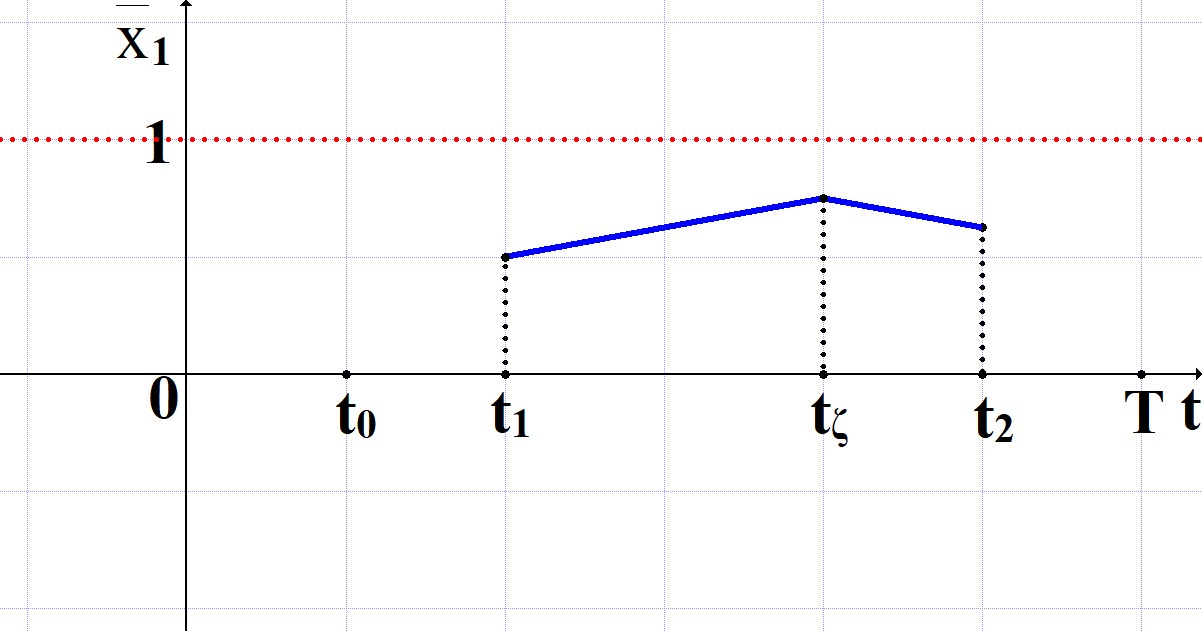}
					\caption{Category C3} 
				\end{figure}
			\end{Proposition}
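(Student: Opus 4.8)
The plan is to analyze the switching function $\varphi(t):=aq_1(t)+e^{-\lambda t}q_2(t)$ that governs \eqref{min_condition_FP_2a}, showing that on the constraint-inactive segment $[\tau_1,\tau_2]$ it is monotone non-increasing; this forces $\bar u$ to be bang-bang with at most one switch, occurring only in the order $-1\to+1$, which is precisely the content of C1--C3. First I would pin down the costate on $[\tau_1,\tau_2]$. Since $h(t,\bar x(t))<0$ there, we have $\bar x_1(t)<1$ on the whole segment, so $[\tau_1,\tau_2]\subset\{t:\bar x_1(t)<1\}$ and \eqref{1st_condition_for_mu} gives $\mu([\tau_1,\tau_2])=0$; consequently $\eta$ is constant on $[\tau_1,\tau_2]$ and $q=p+\eta$ is absolutely continuous there with $\dot q=\dot p$. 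Next, combining \eqref{1st_condition_for_mu}--\eqref{2nd_condition_for_mu} with \eqref{hybrid subdiff_FP_2a} yields $\nu=(1,0)$ $\mu$-a.e., so the second component of $\eta$ vanishes identically on $[t_0,T]$; together with Condition~(ii) ($p_2$ constant) and Condition~(iii) ($q_2(T)=-\gamma$) this gives $q_2(t)\equiv-\gamma$ on all of $[t_0,T]$.

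Then I would compute $\varphi$ on $[\tau_1,\tau_2]$. By Condition~(ii), $\dot q_1(t)=\dot p_1(t)=e^{-\lambda t}q_2(t)=-\gamma e^{-\lambda t}$, and since $q_2$ is constant,
\[
\varphi'(t)=a\dot q_1(t)-\lambda e^{-\lambda t}q_2(t)=\gamma(\lambda-a)e^{-\lambda t}\le 0,
\]
because $a>\lambda$ and $\gamma\ge 0$. Thus $\varphi$ is continuous and non-increasing on $[\tau_1,\tau_2]$, so $\{\varphi>0\}$ is an initial subinterval and $\{\varphi<0\}$ a terminal one. By \eqref{min_condition_FP_2a}, $\varphi(t)>0\Rightarrow\bar u(t)=-1\Rightarrow\dot{\bar x}_1(t)=a$ (rising), while $\varphi(t)<0\Rightarrow\bar u(t)=+1\Rightarrow\dot{\bar x}_1(t)=-a$ (falling). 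Hence $\bar x_1$ either rises throughout (C1), falls throughout (C2), or rises then falls with a single turning point $t_\zeta$ at which $\varphi$ changes sign (C3); the excluded ``valley'' shape is impossible exactly because $\varphi$ cannot increase. Integrating $\dot{\bar x}_1=\mp a$ on each piece then produces the affine formulas \eqref{lemma_interior_traj_1}--\eqref{lemma_interior_traj_3}.

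The one genuinely delicate point---the main obstacle---is the singular possibility $\varphi\equiv 0$ on a nondegenerate subinterval, which the monotonicity argument permits only when $\gamma=0$ and $q_1\equiv 0$ there; then \eqref{min_condition_FP_2a} leaves $\bar u$ undetermined, and the first-order conditions alone do not force a bang-bang shape. To close this gap I would invoke Proposition~\ref{lemma_basic_property_1}: the restriction of $(\bar x,\bar u)$ to $[\tau_1,\tau_2]$ minimizes $x_2(\tau_2)$ with $\bar x_1(\tau_1),\bar x_1(\tau_2)$ fixed. Substituting $u=-\dot x_1/a$ and integrating $\int_{\tau_1}^{\tau_2}e^{-\lambda t}\dot x_1\,dt$ by parts reduces the objective, up to endpoint constants, to maximizing $(1-\tfrac{\lambda}{a})\int_{\tau_1}^{\tau_2}e^{-\lambda t}x_1(t)\,dt$; as $a>\lambda$ the factor is positive, so one simply maximizes $\int_{\tau_1}^{\tau_2}e^{-\lambda t}x_1\,dt$ over the convex set of arcs with $|\dot x_1|\le a$, the fixed endpoints, and $x_1\le 1$. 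Since a local optimizer of an affine functional over a convex set is automatically global, and since $e^{-\lambda t}>0$, the pointwise-highest feasible arc $\min\{\bar x_1(\tau_1)+a(t-\tau_1),\,\bar x_1(\tau_2)+a(\tau_2-t),\,1\}$ is the unique maximizer; the hypothesis $\bar x_1<1$ renders the clamp at $1$ inactive, so this maximizer is exactly the ``tent'' realizing one of C1--C3. This simultaneously disposes of the singular case and establishes uniqueness of the interior shape.
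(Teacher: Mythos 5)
Your argument is correct, but it resolves the crucial degenerate case by a genuinely different device than the paper. The paper's own proof does not work with the original multipliers at all: it invokes Proposition~\ref{lemma_basic_property_1} to restrict $(\bar x,\bar u)$ to $[\tau_1,\tau_2]$ and then re-applies Theorem~\ref{V_thm9.3.1 necessary condition} to the restricted problem $(FP_{2a})|_{[\tau_1,\tau_2]}$, obtaining \emph{fresh} multipliers with $(\widetilde p,\widetilde\mu,\widetilde\gamma)\neq(0,0,0)$. Since the state constraint is inactive on $[\tau_1,\tau_2]$, $\widetilde\mu=0$, and the \emph{localized} nontriviality then forces either $\widetilde\gamma>0$ (switching function strictly decreasing) or $\widetilde\gamma=0$ with $\widetilde p_1\equiv\zeta\neq 0$ (switching function a nonzero constant); in both cases the singular arc is excluded for free. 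You correctly observe that with the global multipliers this exclusion fails --- nontriviality of $(p,\mu,\gamma)$ does not localize, so $\varphi\equiv 0$ on $[\tau_1,\tau_2]$ remains possible --- and you close the gap instead by the integration-by-parts reduction of $x_2(\tau_2)$ to $\mathrm{const}-\bigl(1-\tfrac{\lambda}{a}\bigr)\displaystyle\int_{\tau_1}^{\tau_2}e^{-\lambda t}x_1(t)\,dt$ over the convex set of arcs with $|\dot x_1|\le a$ and fixed endpoints, where local optimality of a linear functional is global and the pointwise upper envelope (the ``tent'') is the unique maximizer. That argument is sound, and in fact proves more than the proposition: it determines $t_\zeta$ from the endpoint data and renders your entire first (maximum-principle) paragraph dispensable. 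What the paper's route buys is uniformity --- the same localization-plus-fresh-multipliers pattern is reused throughout Subsection~\ref{Subsection4.2} --- while your route buys an elementary, self-contained convexity argument that sidesteps the maximum principle entirely for this step.
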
	
			\begin{proof}
				Suppose that $[\tau_1, \tau_2]$ is a subsegment of $[t_0, T]$ with $h (t, \bar x(t))<0$ for all $t\in [\tau_1, \tau_2]$, i.e., $\bar x_1(t)< 1$ for all $t\in [\tau_1, \tau_2]$. Then, it follows from Proposition~\ref{lemma_basic_property_1} that the restriction of $(\bar x, \bar u)$ on $[\tau_1, \tau_2]$ is a $W^{1,1}$ local minimizer for $(FP_{2a})|_{[\tau_1, \tau_2]}$. Since the latter satisfies the assumptions (H1)--(H4) of Theorem~\ref{V_thm9.3.1 necessary condition}, by that theorem one finds $\widetilde p\in W^{1,1}([\tau_1, \tau_2]; \R^2)$, $\widetilde \gamma \geq 0$, $\widetilde \mu \in C^\oplus(\tau_1, \tau_2)$, and a Borel measurable  function $\widetilde \nu:[\tau_1, \tau_2]\to  \R^2$ with the property $(\widetilde p, \widetilde \mu, \widetilde \gamma)\neq (0, 0, 0)$, and for $\widetilde q(t):=\widetilde p(t)+\widetilde \eta(t)$ with \begin{equation}\label{1st_formula_for_w_eta_FP3b}\widetilde \eta(t):=
					\displaystyle\int_{[\tau_1, \tau_2)}\widetilde \nu(\tau)d\widetilde \mu(\tau)\quad\ (\forall t\in [\tau_1, \tau_2))\end{equation} and
				\begin{equation}\label{2nd_formula_for_w_eta_FP3b}\widetilde \eta(\tau_2):=\displaystyle\int_{[\tau_1, \tau_2]}\widetilde \nu(\tau)d\widetilde \mu(\tau),\end{equation} the conditions (i)--(iv) in Theorem~\ref{V_thm9.3.1 necessary condition} hold true, provided that $t_0, T, p, \mu, \gamma, \nu, \eta$, and $q$ are changed respectively to $\tau_1, \tau_2, \widetilde p, \widetilde \mu, \widetilde \gamma, \widetilde \nu, \widetilde \eta$, and $\widetilde q$.
				
				By Condition (i), one has
				\begin{equation}\label{1st_condition_for_w_mu_FP3b}\begin{cases}\widetilde \mu \{t\in [\tau_1, \tau_2] \,:\,\bar x_1(t)<1\}=0,\\
						\widetilde \mu \big\{t\in [\tau_1, \tau_2] \,:\, \bar x_1(t)=1,\; \widetilde \nu(t) \neq (1, 0)\big\}=0.\end{cases}\end{equation}
				
				By Condition (ii), $\dot{\widetilde p_1}(t) = e^{-\lambda t}\widetilde q_2(t)$  for a.e. $t\in [\tau_1, \tau_2]$ and $\widetilde p_2(t)$ is  a constant for all $t\in [\tau_1, \tau_2]$.	
				
				Since $N_{\widetilde C}(\bar x(\tau_1), \bar x(\tau_2))=\R^3\times\{0\}$, by Condition (iii) one has
				$$(\widetilde p(\tau_1), -\widetilde q(\tau_2))\in \{(0, 0, 0, \widetilde\gamma)\}+\R^3\times\{0\}.$$
				This amounts to saying that $\widetilde q_2(\tau_2)=-\widetilde \gamma$.	
				
				Condition (iv) means that
				\begin{equation}\label{min_condition_FP3c}
					[a\widetilde q_1(t)+e^{-\lambda t}\widetilde q_2(t)]\bar u(t)=\min_{u\in [-1, 1]}\left\{[a\widetilde q_1(t)+e^{-\lambda t}\widetilde q_2(t)]u\right\}\ \, \mbox{a.e.}\ t\in [\tau_1, \tau_2].
				\end{equation}
				
				Since $\bar x_1(t)<1$ for all $t\in [\tau_1, \tau_2]$, \eqref{1st_condition_for_w_mu_FP3b} yields $\widetilde \mu([\tau_1, \tau_2])=0$, i.e., $\widetilde \mu=0$. Combining this with  \eqref{1st_formula_for_w_eta_FP3b} and \eqref{2nd_formula_for_w_eta_FP3b}, one gets $\widetilde \eta(t)=0$ for all $t\in [\tau_1, \tau_2]$. Thus, the relation $\widetilde q(t)=\widetilde p(t)+\widetilde \eta(t)$ implies that $\widetilde q(t)=\widetilde p(t)$ for every $t\in [\tau_1, \tau_2]$. Therefore, together with the Lebesgue Theorem \cite[Theorem~6, p.~340]{Kolmogorov_Fomin_1970}, the properties of $\widetilde p(t)$ and $\widetilde q(t)$ established in the above analyses of the conditions~(ii) and~(iii) give $\widetilde p_2(t)=\widetilde q_2(t)=-\widetilde \gamma$ and $\widetilde p_1(t)=\widetilde q_1(t)=\dfrac{\widetilde\gamma}{\lambda}e^{-\lambda t}+\zeta$ for all $t\in [\tau_1, \tau_2]$, where $\zeta$ is a constant. 
				Substituting these formulas for $\widetilde q_1(t)$ and $\widetilde q_2(t)$ to ~\eqref{min_condition_FP3c}, we have
				\begin{equation*}
					\Big[a(\dfrac{\widetilde\gamma}{\lambda}e^{-\lambda t}+\zeta)-\widetilde\gamma e^{-\lambda t}\Big]\bar u(t)=\min_{u\in [-1, 1]}\left\{[a(\dfrac{\widetilde\gamma}{\lambda}e^{-\lambda t}+\zeta)-\widetilde\gamma e^{-\lambda t}]u\right\} \quad \mbox{a.e. } t\in [\tau_1, \tau_2]
				\end{equation*}
				or, equivalently, 
				\begin{equation}\label{min_condition_FP3d}
					[\widetilde \gamma (\dfrac{a}{\lambda}-1)e^{-\lambda t}+a\zeta]\bar u(t)=\min_{u\in [-1, 1]}\left\{[\widetilde \gamma (\dfrac{a}{\lambda}-1)e^{-\lambda t}+a\zeta]u\right\} \quad \mbox{a.e. } t\in [\tau_1, \tau_2].
				\end{equation}
				
				Set $\widetilde\varphi(t)=\widetilde \gamma (\dfrac{a}{\lambda}-1)e^{-\lambda t}+a\zeta$ for all $t\in [\tau_1, \tau_2]$. 
				
				If $\widetilde\gamma=0$, then $\widetilde\varphi(t)\equiv a\zeta$ on $[\tau_1, \tau_2]$. Since $a>0$, the condition $(\widetilde p, \widetilde \mu, \widetilde \gamma)\neq (0, 0, 0)$ implies that $\zeta\neq 0$. If $\zeta>0$, then  $\widetilde\varphi(t)>0$ for all $t\in [\tau_1, \tau_2]$. If $\zeta<0$, then $\widetilde\varphi(t)<0$ for all $t\in [\tau_1, \tau_2]$. Thus, if $\zeta>0$, then \eqref{min_condition_FP3d} implies that $\bar u(t)=-1$ a.e. $t\in [\tau_1, \tau_2]$.  Similarly, if  $\zeta<0$, then $\bar u(t)=1$ a.e. $t\in [\tau_1, \tau_2]$. Hence, applying the Lebesgue Theorem \cite[Theorem~6, p.~340]{Kolmogorov_Fomin_1970} to the absolutely continuous function $\bar x_1(t)$, one has 
				\begin{equation} \label{interior_traj_4}
					\bar x_1(t)=\bar x_1(\tau_1)+a(t-\tau_1) \quad (\forall t\in [\tau_1, \tau_2])
				\end{equation}
				in the first case, and
				\begin{equation} \label{interior_traj_5}
					\bar x_1(t)=\bar x_1(\tau_1)-a(t-\tau_1) \quad (\forall t\in [\tau_1, \tau_2])
				\end{equation} 	in the second case.			
				
				If $\widetilde\gamma>0$ then, due to the assumption $a>\lambda>0$, $\widetilde\varphi$ is strictly decreasing on $[\tau_1, \tau_2]$. When there exists $t_{\zeta}\in (\tau_1, \tau_2)$ such that $\widetilde\varphi(t_{\zeta})=0$, one has $\widetilde\varphi(t)>0$ for $t\in (\tau_1, t_{\zeta})$ and $\widetilde\varphi(t)<0$ for $t\in (t_{\zeta}, \tau_2)$. Hence, \eqref{min_condition_FP3d} forces $\bar u(t)=-1$ a.e. $t\in [\tau_1, t_{\zeta}]$ and $\bar u(t)=1$ a.e. $t\in [t_{\zeta}, \tau_2]$.  Thus, by the cited above Lebesgue Theorem,
				\begin{equation*} \label{interior_traj_6}
					\bar x_1(t)=
					\begin{cases}
						\bar x_1(\tau_1)+a(t-\tau_1), \quad&\mbox{for}\; t\in [\tau_1, t_{\zeta}]\\
						\bar x_1(t_{\zeta})-a(t-t_\zeta), &\mbox{for}\; t\in (t_{\zeta}, \tau_2].
					\end{cases}
				\end{equation*} As $\bar x_1(t)<1$ for all $t\in [\tau_1, \tau_2]$, one must have $\bar x_1(t_{\zeta})<1$, i.e., $t_{\zeta}<\tau_1+a^{-1}(1-\bar x_1(\tau_1))$.  When $\widetilde\varphi(t)>0$ for all $t\in (\tau_1, \tau_2)$, condition \eqref{min_condition_FP3d} implies that $\bar u(t)=-1$ a.e. $t\in [\tau_1, \tau_2]$. So,  $\bar x_1(t)$ is defined by \eqref{interior_traj_4}. When $\widetilde\varphi(t)<0$ for all $t\in (\tau_1, \tau_2)$,  condition \eqref{min_condition_FP3d} implies that $\bar u(t)=1$ a.e. $t\in [\tau_1, \tau_2]$. Hence,  $\bar x_1(t)$ is defined by \eqref{interior_traj_5}.

				In summary, for any $\tau_1, \tau_2$ with $t_0\leq \tau_1<\tau_2\leq T$ and $\bar x_1(t)<1$ for all $t\in [\tau_1, \tau_2]$, the curve $t\mapsto \bar x_1(t)$, $t\in [\tau_1, \tau_2]$, cannot have more than one turning point. Namely, the curve must be of one of the three categories \eqref{lemma_interior_traj_1}--\eqref{lemma_interior_traj_3}.
			\end{proof}	
					
		To proceed furthermore, put $ {\mathcal T}_1:=\{t\in [t_0,T]\; :\; \bar x_1(t)=1\}.$ Since $\bar x_1(t)$ is a continuous function, ${\mathcal T}_1$ is a compact set (which may be empty).
				
		{\bf Case 1:} \textit{${\mathcal T}_1=\emptyset$, i.e., $\bar x_1 (t) <1$ for all $t\in [t_0, T]$.} Then, by \eqref{1st_condition_for_mu} one has $\mu([t_0, T])=0$, i.e., $\mu=0$. Combining this with  \eqref{1st_formula_for_eta} and \eqref{2nd_formula_for_eta}, one gets $\eta(t)=0$ for all $t\in [t_0, T]$. Thus, the relation $q(t)=p(t)+\eta(t)$ allows us to have $q(t)=p(t)$ for every $t\in [t_0, T]$. Therefore, together with the Lebesgue Theorem \cite[Theorem~6, p.~340]{Kolmogorov_Fomin_1970}, the properties of $p(t)$ and $q(t)$ established in the above analyses of the conditions~(ii) and~(iii) give $$p_2(t)=q_2(T)=-\gamma\quad\ (\forall t\in [t_0, T])$$ and $$p_1(t)=p_1(T)+\int_T^t  \dot p_1(\tau)d\tau=q_1(T)+\int_T^t  \big(-\gamma e^{-\lambda \tau}\big)d\tau= \dfrac{\gamma}{\lambda}\big(e^{-\lambda t}-e^{-\lambda T}\big)$$ for all $t\in [t_0, T]$. Now, observe that substituting $q(t)=p(t)$ into \eqref{min_condition_FP_2a} yields
		\begin{equation}\label{min_condition_FP2b}
		[ap_1(t)+e^{-\lambda t}p_2(t)]\bar u(t)=\min_{u\in [-1, 1]}\left\{[ap_1(t)+e^{-\lambda t}p_2(t)]u\right\} \quad \mbox{a.e. } t\in [t_0, T].
		\end{equation}
		Setting $\varphi(t)=ap_1(t)+e^{-\lambda t}p_2(t)$ for $t\in [t_0, T]$ and using the above formulas of $p_1(t)$ and $p_2(t)$, we have 
 		\begin{align*}
		\varphi(t) =a\dfrac{\gamma}{\lambda}\big(e^{-\lambda t}-e^{-\lambda T}\big)-\gamma e^{-\lambda t}=\gamma (\dfrac{a}{\lambda}-1)e^{-\lambda t}-\gamma\dfrac{a}{\lambda} e^{-\lambda T}
		\end{align*}
		for $t\in [t_0, T]$. Due to the condition $(p, \gamma, \mu)\neq 0$, one must have $\gamma>0$. Moreover, the assumption $a>\lambda>0$ implies $\dfrac{a}{\lambda}>1$. Thus, the function $\varphi(t)$  is decreasing on  $[t_0, T]$. In addition, it is clear that $\varphi(T)=-\gamma e^{-\lambda T}<0$, and $\varphi(t)=0$ if and only if $t=\bar t$, where
		 \begin{equation}\label{special_time}\bar t:=T-\dfrac{1}{\lambda}\ln \dfrac{a}{a-\lambda}.
		 \end{equation}The assumption $a>\lambda>0$ implies that $\bar t<T$. Note that the number $\rho:=\dfrac{1}{\lambda}\ln \dfrac{a}{a-\lambda}$ does not depend on the initial time $t_0$ and the terminal time $T$.
		 
	If $t_0\geq \bar t$, then one has $\varphi(t)<0$ for all $t\in (t_0, T)$. This situation happens if and only if $T-t_0\leq\rho$ (the time interval of the optimal control problem is rather small). Clearly, condition \eqref{min_condition_FP2b} forces $\bar u(t)=1$ a.e. $t\in [t_0, T]$. Since \eqref{state control system_FP_2a} is fulfilled for $x(t)=\bar x(t)$ and $u(t)=\bar u(t)$, applying the Lebesgue Theorem \cite[Theorem~6, p.~340]{Kolmogorov_Fomin_1970} to the absolutely continuous function $\bar x_1(t)$, one has
		\begin{equation}\label{descending_trajectory1}\bar x_1(t)=\bar x_1(t_0)+\int_{t_0}^t\dot{\bar x}_1(\tau)d\tau=\bar x_1(t_0)+\int_{t_0}^t(-a\bar u(\tau))d\tau=x_0-a(t-t_0)\end{equation} for all $t\in [t_0, T]$. In addition, by \eqref{phase_second_component} one finds that \begin{equation}\label{descending_trajectory2}\bar x_2(t)=\int_{t_0}^{t} \big[-e^{-\lambda\tau}(\bar x_1(\tau)+\bar u(\tau))\big] d\tau=\int_{t_0}^{t} \big[-e^{-\lambda \tau}\big(x_0-a(\tau-t_0)+1\big)\big] d\tau\end{equation} for all $t\in [t_0, T]$.
			
		{If $t_0<\bar t$, then $\varphi(t)>0$ for $t\in (t_0, \bar t)$ and $\varphi(t)<0$ for $t\in (\bar t, T)$.  This situation happens if and only if $T-t_0>\rho$ (the time interval of the optimal control  problem is large enough).  Condition \eqref{min_condition_FP2b} yields $\bar u(t)=-1$ for a.e. $t\in [t_0, \bar t]$ and $\bar u(t)=1$ for a.e. $t\in [\bar t, T]$. Hence, by the above-cited Lebesgue Theorem, one has 
		\begin{equation}\label{up_then_down_traj}
		\bar x_1(t)=
		\begin{cases}
		x_0+a(t-t_0),\quad &\mbox{if }\,  t\in [t_0, \bar t]\\
		\bar x_1(\bar t)-a(t-\bar t),\quad &\mbox{if }\,  t\in (\bar t, T].
		\end{cases}
		\end{equation}
		Therefore, from \eqref{phase_second_component}, we have
		\begin{align}\label{up_then_down_traj2}
	\bar x_2(t)=\begin{cases}
	\displaystyle\int_{t_0}^{t} \big[-e^{-\lambda \tau}\big(x_0+a(\tau-t_0)+1\big)\big] d\tau, \quad &\mbox{if }\, t\in [t_0, \bar t]\\
	\displaystyle\int_{t_0}^{t} \big[-e^{-\lambda \tau}\big(\bar x_1(\bar t)-a(\tau-\bar t)+1]d\tau,  &\mbox{if }\, t\in (\bar t, T].
	\end{cases}
	\end{align}
		Noting that $\bar x (t) <1$ for all $t\in [t_0, T]$ by our assumption, we must have  $\bar x_1(\bar t)<1$, i.e., $\bar t<t_0+a^{-1}(1-x_0)$. Since $\bar t=T- \rho$, the last inequality is equivalent to $T-t_0<\rho+a^{-1}(1-x_0)$.}
		
			{Thus, if  ${\mathcal T}_1=\emptyset$ and $T-t_0\leq\rho$, then the unique process $(\bar x,\bar u)$ suspected for a $W^{1,1}$ local optimizer of $(FP_{2a})$ is the one with $\bar u(t)=1$ a.e. $t\in [t_0, T]$, $\bar x(t)=(\bar x_1(t),\bar x_2(t))$, where $\bar x_1(t)$ and  $\bar x_2(t)$ are given respectively by \eqref{descending_trajectory1} and \eqref{descending_trajectory2}. Otherwise, if ${\mathcal T}_1=\emptyset$ and $$\rho<T-t_0<\rho+a^{-1}(1-x_0),$$ then the unique process $(\bar x,\bar u)$ serving as a $W^{1,1}$ local optimizer of $(FP_{2a})$ is the one with $\bar u(t)=-1$ for a.e. $t\in [t_0, \bar t]$ and $\bar u(t)=1$ for a.e. $t\in [\bar t, T]$, $\bar x(t)=(\bar x_1(t),\bar x_2(t))$, where $\bar x_1(t)$ and  $\bar x_2(t)$ are defined respectively by \eqref{up_then_down_traj} and \eqref{up_then_down_traj2}. The situation where ${\mathcal T}_1=\emptyset$ and $T-t_0\geq \rho+a^{-1}(1-x_0)$ cannot occur.  The situation where ${\mathcal T}_1=\emptyset$ and $x_0\geq 1-a(\bar t-t_0)$ also cannot occur.}
	
			Now, suppose that ${\mathcal T}_1\neq\emptyset$, i.e., there exists $t\in [t_0, T]$ with the property $\bar x (t) =1$. Setting
				\begin{equation*}\label{two_alphas} \alpha_1:=\min \{t\in [t_0,T]\; :\; \bar x_1(t)=1\},\quad \alpha_2:=\max\{t\in [t_0,T]\; :\; \bar x_1(t)=1\},
				\end{equation*} we have $t_0\leq\alpha_1\leq\alpha_2\leq T$. The following situations can occur.
				
				{\bf Case 2:} \textit{$t_0<\alpha_1=\alpha_2=T$, i.e., $\bar x_1(t)<1$ for $t\in [t_0,T)$ and $\bar x_1(T)=1$}. Clearly,~\eqref{1st_condition_for_mu} means that $\mu([t_0, T))=0$. Moreover, if $\nu(T)\neq (1, 0)$, then from \eqref{2nd_condition_for_mu} it follows that $\mu(\{T\})=0$. So, we have $\mu([t_0, T])=\mu([t_0, T))+\mu(\{T\})=0$, i.e., $\mu=0$. Hence, we can repeat the arguments already used in Case 1 to prove that either $\bar x_1(t)=x_0-a(t-t_0)$ for all $t\in [t_0, T]$, or \begin{equation*}
				\bar x_1(t)=
				\begin{cases}
				x_0+a(t-t_0),\quad &\mbox{if }\,  t\in [t_0, \bar t]\\
				\bar x_1(\bar t)-a(t-\bar t),\quad &\mbox{if }\,  t\in (\bar t, T].
				\end{cases}
				\end{equation*} In particular, either we have $\bar x_1(T)=x_0-a(T-t_0)<1$, or $\bar x_1(T)=\bar x_1(\bar t)-a(T-\bar t)<1$. Both instances are impossible, because $\bar x_1(T)=1$. So, the situation $\nu(T)\neq (1, 0)$ is excluded; thus $\nu(T)=(1, 0)$. 
				
				From \eqref{1st_formula_for_eta} and \eqref{2nd_formula_for_eta}, one gets $\eta(t)=0$ for $t\in [t_0, T)$ and $\eta(T)=(\mu(T)-\mu(T-0), 0),$ where $\mu(T-0)$ denotes the left limit of $\mu$ at $T$. Therefore, the relation $q(t)=p(t)+\eta(t)$, which holds for every $t\in [t_0, T]$, yields $q_1(t)=p_1(t)$ for $t\in [t_0, T)$, $q_1(T)=p_1(T)+\mu(T)-\mu(T-0)$, and $q_2(t)=p_2(t)$ for $t\in [t_0, T]$. Combining this with the above results of our analyses of the conditions (ii) and (iii), we have $p_2(t)=-\gamma$ and $p_1(t)=\dfrac{\gamma}{\lambda}e^{-\lambda t}+\zeta$ for all $t\in [t_0, T]$, with~$\zeta$ being a constant. Since $q(t)$ equals to $p(t)$ everywhere on $[t_0, T]$, except possibly for $t=T$, condition~\eqref{min_condition_FP_2a} implies that
						\begin{equation}\label{min_condition_FP2c}
						[ap_1(t)+e^{-\lambda t}p_2(t)]\bar u(t)=\min_{u\in [-1, 1]}\left\{[ap_1(t)+e^{-\lambda t}p_2(t)]u\right\} \quad \mbox{a.e. } t\in [t_0, T].
						\end{equation}
				As in Case 1, we set $\varphi(t)=ap_1(t)+e^{-\lambda t}p_2(t)$ for every $t\in [t_0, T]$. Here one has 
						\begin{align*}
						\varphi(t) =a\big(\dfrac{\gamma}{\lambda}e^{-\lambda t}+\zeta\big)-\gamma e^{-\lambda t}=\gamma (\dfrac{a}{\lambda}-1)e^{-\lambda t}+a\zeta
						\end{align*}
						for all $t\in [t_0, T]$.
				Since $\dfrac{a}{\lambda}>1$, the function $\varphi(t)$  is decreasing on  $[t_0, T]$. Besides, since $\mu(T)-\mu(T-0)\geq 0,$ $q_1(T)=p_1(T)+\mu(T)-\mu(T-0)$, and $q_1(T)=0$, we have $p_1(T)\leq 0$. So, $\varphi(T)=ap_1(T)-\gamma e^{-\lambda T}<0$. If $\varphi(t)<0$ for all $t\in (t_0,T)$, then by~\eqref{min_condition_FP2c} one has $\bar u(t)=1$ for a.e.  $t\in [t_0,T]$. 
				So, as it has been done in \eqref{descending_trajectory1}, we have $\bar x_1(t)=x_0-a(t-t_0)$ for all $t\in [t_0,T]$. This yields $\bar x_1(T)<x_0<1$. We have arrived at a contraction. Now, suppose that there exists $\bar t_\zeta\in [t_0,T)$ satisfying $\varphi(\bar t_\zeta)=0$.  Then $\varphi(t)>0$ for $t\in (t_0, \bar t_\zeta)$ and $\varphi(t)<0$ for $t\in (\bar t_\zeta, T)$. Thus, \eqref{min_condition_FP2b} yields $\bar u(t)=-1$ for a.e. $t\in [t_0, \bar t_\zeta]$ and $\bar u(t)=1$ for a.e. $t\in [\bar t_\zeta, T]$. Hence, applying the Lebesgue Theorem \cite[Theorem~6, p.~340]{Kolmogorov_Fomin_1970} to the absolutely continuous function $\bar x_1(t)$, one has $\bar x_1(t)=a(t-t_0)+x_0$ for all $t\in [t_0, \bar t_\zeta]$ and $\bar x_1(t)=-a(t-\bar t_\zeta)+\bar x_1(\bar t_\zeta)$ for every $t\in [\bar t_\zeta, T]$. As $\bar x (t) <1$ for all $t\in [t_0, T]$ by our assumption, we must have  $\bar x_1(\bar t_\zeta)<1$.  Then we get $\bar x_1(T)=-a(T-\bar t_\zeta)+\bar x_1(\bar t_\zeta)<1,$ which is impossible.
				
				{\bf Case 3:} \textit{$t_0=\alpha_1=\alpha_2<T$, i.e., $x_0=1$ and $\bar x_1(t)<1$ for $t\in (t_0,T]$}. Let $\bar\varepsilon>0$ be such that $t_0+\bar\varepsilon< T$. For any $k\in\N$ with $k^{-1}\in (0,\bar\varepsilon)$, by Proposition~\ref{lemma_basic_property_2} we know that the restriction of $(\bar x, \bar u)$ on $[t_0+k^{-1}, T]$ is a $W^{1,1}$ local minimizer for the Mayer problem~$(FP_{2b})$, which is obtained from $(FP_{2a})$ by replacing $t_0$ with $t_0+k^{-1}$. Since $\bar x_1(t)<1$ for all $t\in [t_0+k^{-1},T]$, we can  repeat the arguments already used in Case 1 to get that  either $\bar x_1(t)=\bar x_1(t_0+k^{-1})-a(t-t_0-k^{-1})$ for all $t\in [t_0+k^{-1}, T]$, or 
				\begin{equation*}
			    \bar x_1(t)=
				\begin{cases}
				\bar x_1(\bar t)+a(t-\bar t),\quad &\mbox{if }\,  t\in [t_0+k^{-1}, \bar t]\\
				\bar x_1(\bar t)-a(t-\bar t),\quad &\mbox{if }\,  t\in (\bar t, T]
				\end{cases}
				\end{equation*} with $\bar t=T-\rho$, $\bar t\in  [t_0+k^{-1}, T]$, and $\bar x_1(\bar t)<1$. By the Dirichlet principle, there must exist an infinite number of indexes $k$ with $k^{-1}\in (0,\bar\varepsilon)$ such that $\bar x_1(t)$ has the first form (resp., the second form). Without loss of generality, we may assume that this happens for all $k$ with $k^{-1}\in (0,\bar\varepsilon)$. If the first situation occurs, then by letting $k\to\infty$ we can assert that $\bar x_1(t)=1-a(t-t_0)$ for all $t\in [t_0, T]$. If the second situation occurs, then we have
				\begin{equation*}
				x_0=\displaystyle\lim_{k\to\infty}\bar x_1(t_0+k^{-1})=
						\displaystyle\lim_{k\to\infty}\big[\bar x_1(\bar t)+a(t_0+k^{-1}-\bar t)\big]=\bar x_1(\bar t)+a(t_0-\bar t).
				\end{equation*} Since $\bar x_1(\bar t)+a(t_0-\bar t)\leq \bar x_1(\bar t)<1$ and $x_0=1$, we have arrived at a contradiction.
								
				{\bf Case 4:} \textit{$t_0<\alpha_1\leq\alpha_2<T$}. Then, $\bar x_1(\alpha_1)=\bar x_1(\alpha_2)=1$, $\bar x_1(t)<1$ for $t\in [t_0,\alpha_1)\cup  (\alpha_2,T]$. To find a formula for $(\bar x, \bar u)$ on $[\alpha_2, T]$, observe from Proposition~\ref{lemma_basic_property_2} that the restriction of $(\bar x, \bar u)$ on $[\alpha_2, T]$ is a $W^{1,1}$ local minimizer for the Mayer problem obtained from $(FP_{2a})$ by replacing $t_0$ with $\alpha_2$. Thus, the result in Case 3 applied to the process $(\bar x(t), \bar u(t))$, $t\in [\alpha_2, T]$, implies that $\bar x_1(t)=1-a(t-\alpha_2)$ and $\bar x_2(t)=\displaystyle\int_{\alpha_2}^{t} \big[-e^{-\lambda \tau}\big(1-a(\tau-\alpha_2)+1\big)\big] d\tau$ for all $t\in [\alpha_2, T]$. To obtain a formula for $(\bar x, \bar u)$ on $[t_0, \alpha_2]$,  consider the following two subcases.
		
			\underline{\textit{Subcase 4a}}: $t_0<\alpha_1=\alpha_2<T$. Here we have $\bar x_1(\alpha_1)=1$ and $\bar x_1(t)<1$ for all $t\in [t_0, T]\setminus\{\alpha_1\}$. To find a formula for $\bar x_1(.)$ on $[t_0, \alpha_1]$, we temporarily fix a value $\alpha \in (t_0, \alpha_1)$ (later, we will let  $\alpha$ converge to $\alpha_1$). Since $\bar x_1(t)<1$ for all $[t_0, \alpha]$, applying Proposition~\ref{lemma_interior_traj} with $\tau_1:=t_0$ and $\tau_2:=\alpha$, we can assert that the restriction of $\bar x_1(.)$ on $[t_0, \alpha]$ is defined by one of next three formulas: 
				\begin{equation}\label{interior_traj_1}
				\bar x_1(t)=x_0+a(t-t_1), \quad t\in [t_0, \alpha],
				\end{equation}
				\begin{equation}\label{interior_traj_2}
				\bar x_1(t)=x_0-a(t-t_1), \quad t\in [t_0, \alpha],
				\end{equation}
				and
				\begin{equation} \label{interior_traj_3}
				\bar x_1(t)=
				\begin{cases}
				 x_0+a(t-t_1), \quad & t\in [t_0, t_{\zeta}]\\
				\bar x_1(t_{\zeta})-a(t-t_\zeta), & t\in (t_{\zeta}, \alpha],
				\end{cases}
				\end{equation}
		where $t_{\zeta} \in (t_0, \alpha)$. Hence, the graph of $\bar x_1(.)$ on $[t_0, \alpha]$ is of one of the following types: 
		C1)~\textit{Going up} as in Fig.~\ref{fig4}; C2) \textit{Going down} as in Fig.~\ref{fig5}; C3) \textit{Going up first and then going down} as in Fig.~\ref{fig6}.
\begin{figure}[!ht] \begin{minipage}[b]{0.5\textwidth}
					      \centering
					      \includegraphics[scale=.15]{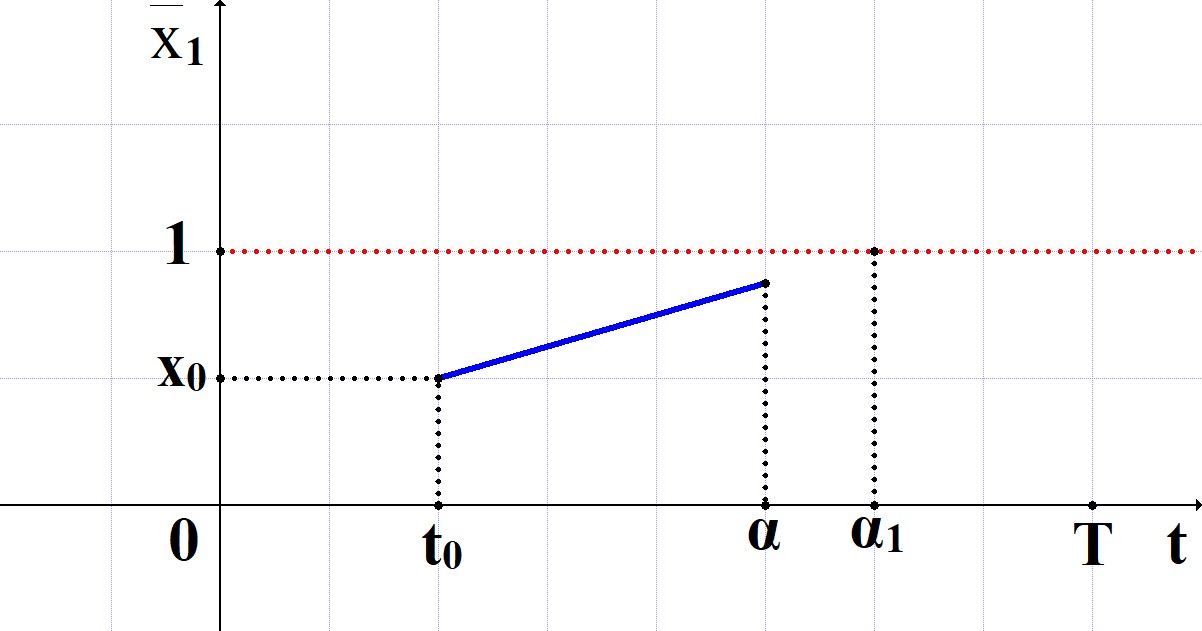}
					      \caption{Category C1}\label{fig4}
					      \end{minipage}
					      \hfill
					      \begin{minipage}[b]{0.5\textwidth}
					      \centering
					      \includegraphics[scale=.15]{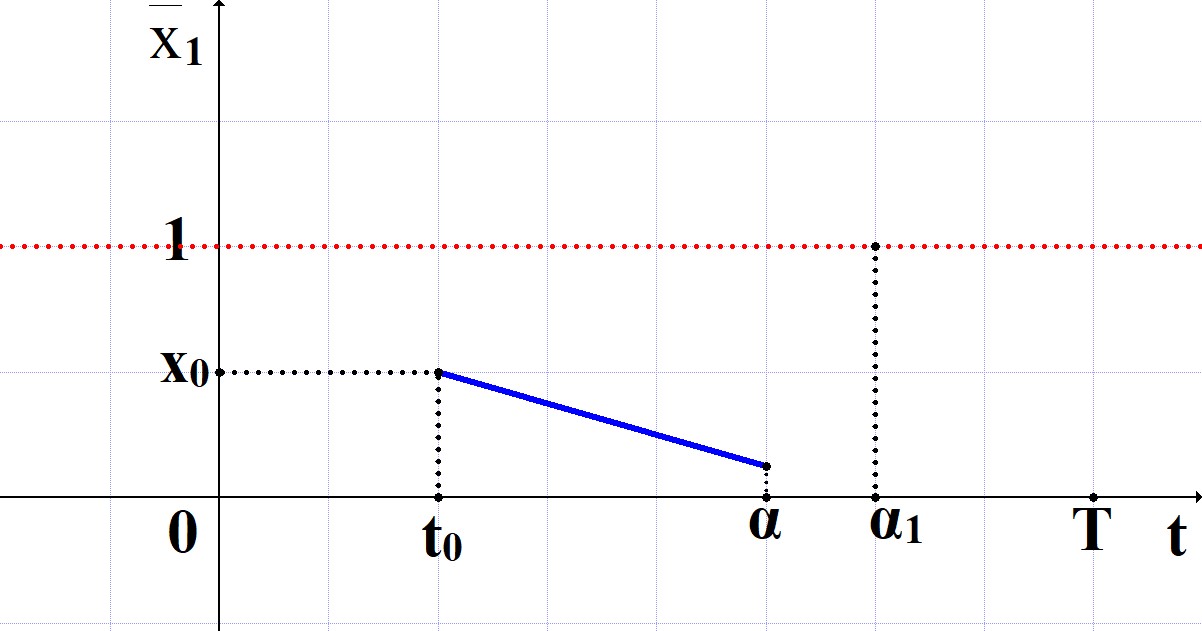}
					      \caption{Category C2}\label{fig5}
					      \end{minipage}
					      \end{figure}	
	 \begin{figure}[!ht]
			\centering
		    \includegraphics[scale=.15]{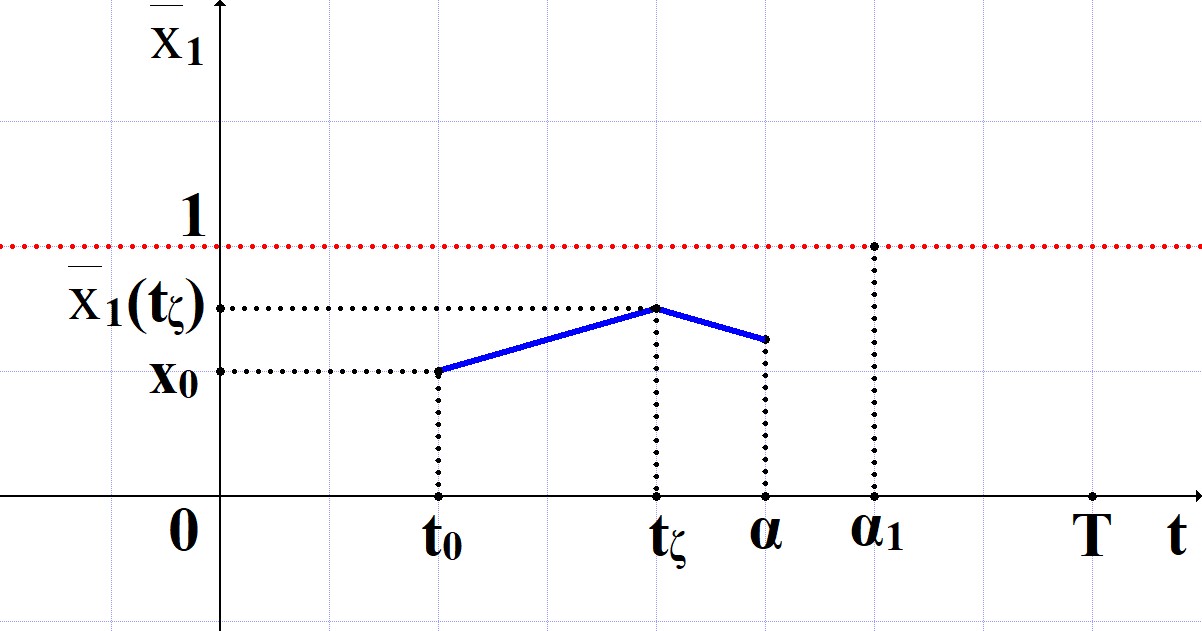}
		     \caption{Category C3}\label{fig6}
			\end{figure}
			Now, let $\alpha=\alpha^{(k)}$ with $\alpha^{(k)}:=\alpha_1-\dfrac{1}{k}$, where $k\in \N$ is as large as  $\alpha\in (t_0, \alpha_1)$. Since for each $k$ the restriction of the graph of $\bar x_1(.)$ on $[t_0,\alpha^{(k)}]$ must be of one of the three types C1--C3, by the Dirichlet principle we can find a subsequence $\{k'\}$ of $\{k\}$ such that the corresponding graphs belong to a fixed category. If the latter is~C2, then by~\eqref{interior_traj_2} and the continuity of $\bar x_1(.)$ one has 
		$$\bar x_1(\alpha_1)=\displaystyle\lim_{k'\to\infty} \bar x_1(\alpha^{(k')})=\displaystyle\lim_{k'\to\infty} \Big[x_0-a(\alpha^{(k')}-t_0)\Big]=x_0-a(\alpha_1-t_0).$$ This is impossible, because $\bar x_1(\alpha_1)=1$. Similarly, the situation where the fixed category is~C3 can be excluded by using \eqref{interior_traj_3}. If the graphs belong to the category~C1, from~\eqref{interior_traj_1} we deduce that
		\begin{equation}\label{interior_traj_1a}
		\bar x_1(t)=x_0+a(t-t_0) \quad (\forall t\in [t_0, \alpha_1]).
		\end{equation} Then, the condition $\bar x_1(\alpha_1)=1$ is satisfied if and only if	$\alpha_1=t_0+a^{-1}(1-x_0).$ 
							
	    \underline{\textit{Subcase 4b}}: $t_0<\alpha_1<\alpha_2<T$. Then, one has $\bar x_1(\alpha_1)=\bar x_1(\alpha_2)=1$ and $\bar x_1(t)<1$ for $t\in [t_0,\alpha_1)\cup (\alpha_2,T]$. We are going to show that this situation cannot occur.  
	    
	    Suppose first that $\bar x_1(t)=1$ for all $t\in (\alpha_1, \alpha_2)$. Since $(\bar x, \bar u)$ is a $W^{1,1}$ local minimizer of $(FP_{2a})$, by Definition \ref{local_minimizer} we can find $\delta>0$ such that the process $(\bar x, \bar u)$ minimizes the quantity $g(x(t_0), x(T))=x_2(T)$ over all feasible processes $(x, u)$ of $(FP_{2a})$ with $\|\bar x-x\|_{W^{1,1}} \leq \delta$. By the result given before Subcase~4a, one has $\bar x_1(t)=1-a(t-\alpha_2)$ for all $t\in [\alpha_2, T]$. Fixing a number $\alpha\in (\alpha_1,\alpha_2)$, we consider the pair of functions $(\widetilde x^\alpha, \widetilde u^\alpha)$ defined by 
	    		\begin{align*}
	    		\widetilde x^\alpha (t):=
	    		\begin{cases}
	    		\bar x(t),  \ \;  t\in [t_0, \alpha)\\
	    		1-a(t-\alpha), \ \; t\in [\alpha, T]
	    		\end{cases}
	    		\quad\; \mbox{and} \quad\;
	    		\widetilde u^\alpha(t):=
	    		\begin{cases}
	    		\bar u(t),  \ \;  t\in [t_0, \alpha)\\
	    		1,  \ \;  t\in [\alpha, T].
	    		\end{cases}
	    		\end{align*}
	    		It is easy to check that $(\widetilde x^\alpha, \widetilde u^\alpha)$ is a feasible process of $(FP_{2a})$. Besides, by direct computing, we have $$\bar x_2(T)-\widetilde x_2^\alpha (T)=\dfrac{1}{\lambda}[a(\alpha-\alpha_2)e^{-\lambda T}+2(e^{-\lambda \alpha}-e^{-\lambda \alpha_2})].$$
	    		Thus, the condition $\alpha<\alpha_2$ yields $\bar x_2(T)>\widetilde x_2^\alpha (T)$. Since  $\displaystyle\lim_{\alpha\to \alpha_2}\|\bar x-\widetilde x^\alpha\|_{W^{1,1}} =0$, one has $\|\bar x-\widetilde x^\alpha\|_{W^{1,1}}\leq\delta$ for all $\alpha\in (\alpha_1,\alpha_2)$ sufficiently close to $\alpha_2$. This contradicts the assumed $W^{1,1}$ local optimality of the process $(\bar x,\bar u)$.
	    		
	    		Now, suppose that there exists $\hat t \in (\alpha_1, \alpha_2)$ such that $\bar x_1(\hat t)<1$. By the continuity of $\bar x_1(.)$, the constants $\hat \alpha_1:=\max\{t\in [\alpha_1, \hat t]\,:\, \bar x_1(t)=1\}$ and $\hat \alpha_2:=\min\{t\in [\hat t, \alpha_2]\,:\, \bar x_1(t)=1\}$ are well defined. Note that $\hat t\in \big(\hat \alpha_1,\hat \alpha_2\big)$ and $\bar x_1(t)<1$ for every $t\in (\hat \alpha_1, \hat \alpha_2)$. If $\varepsilon>0$ is small enough, then $\hat \alpha_1+\varepsilon\in \big(\hat \alpha_1,\hat t\big)$.  Using the result given in Subcase~4a for the restriction of the function $\bar x_1(t)$ on the segment $[\hat \alpha_1+\varepsilon, \hat \alpha_2]$ (thus, $\hat \alpha_1+\varepsilon$ plays the role of $t_0$ and $\hat \alpha_2$ takes the place of $\alpha_1$), one finds that 
	    		\begin{equation*}
	    		\bar x_1(t)=\bar x_1(\hat \alpha_1+\varepsilon)+a(t-\hat \alpha_1-\varepsilon) \quad\; (\forall t\in [\hat \alpha_1+\varepsilon, \hat \alpha_2]).
	    		\end{equation*}
	    		In particular, the function $\bar x_1(t)$ is strictly increasing on $[\hat \alpha_1+\varepsilon, \hat \alpha_2]$. Since $\hat t\in \big( \alpha_1+\varepsilon,\hat \alpha_2\big)$, this implies that $\bar x_1(\hat \alpha_1+\varepsilon)<\bar x_1(\hat t)$. Then, by the continuity of $\bar x_1(t)$ we obtain
	    		\begin{equation*}
	    		\bar x_1(\hat \alpha_1)=\displaystyle\lim_{\varepsilon\to 0} \bar x_1(\hat \alpha_1+\varepsilon)\leq\bar x_1(\hat t)<1.
	    		\end{equation*}
	    		As $\bar x_1(\hat \alpha_1)=1$, we have arrived at a contradiction.
	    		
	    	Since Subcase~4b cannot happen, we conclude that the formula for $\bar x_1(t)$ in this case is given by 
	    		 \begin{equation*}
	    		\bar x_1(t)=
	    		\begin{cases}
	    		x_0+a(t-t_0), \quad & t \in [t_0, \alpha_1]\\
	    		1-a(t-\alpha_1), & t \in (\alpha_1, T],
	    		\end{cases}
	    		\end{equation*}
	    	    with $\alpha_1:=t_0+a^{-1}(1-x_0)$.  One must have $\alpha_1\leq \bar t$, where $\bar t$ is defined by \eqref{special_time}. Indeed, suppose on the contrary that $\alpha_1>\bar t$. For an arbitrarily given $\alpha \in (\bar t, \alpha_1)$, we consider the problem $(FP_{1b})$ (resp.,  the problem $(FP_{2b})$) which is obtained from the problem $(FP_{1a})$ in Section~\ref{Example 1} (resp., from the above problem $(FP_{2b})$) by letting  $\alpha$ play the role of the initial time $t_0$. Since $\alpha > \bar t$, it follows from Theorem~\ref{Thm1} that $(FP_{1b})$ has a unique global solution $(\bar x^\alpha,\bar u^\alpha)$, where $\bar u(t)=1$ for almost everywhere $t\in [\alpha, T]$, $\bar x_1^\alpha(t)=\bar x_1(\alpha)-a(t-\alpha)$ for all $t \in [\alpha, T]$, and $\bar x_2^\alpha(t)= \int_{\alpha}^{t} \big[-e^{-\lambda\tau}(x_1(\tau)+u(\tau))\big] d\tau$ for all $t \in [\alpha, T]$. Clearly, the restriction of $(\bar x, \bar u)$ on $[\alpha, T]$ is a feasible process for $(FP_{1b})$. Thus, we have 
	    	    \begin{equation}\label{intergral_inequality}
	    	    \bar x_2^\alpha(T)<\bar x_2(T).
	    	    \end{equation}
	    		Besides, by Proposition~\ref{lemma_basic_property_2}, the restriction of $(\bar x, \bar u)$ on $[\alpha, T]$ is a $W^{1,1}$ local solution for $(FP_{2b})$. So, there exits $\delta>0$ such that the restriction of $(\bar x, \bar u)$ on $[\alpha, T]$ minimizes the quantity $x_2(T)$ over all feasible processes $(x, u)$ of $(FP_{2b})$ with $\|x-\bar x\|_{W^{1,1}([\alpha, T]; \R^n)} \leq \delta$. Clearly,  $(\bar x^\alpha,\bar u^\alpha)$ is a feasible process of $(FP_{2b})$. Therefore, since  $\|\bar x^\alpha -\bar x\|_{W^{1,1}([\alpha, T]; \R^n)} \leq \delta$ for all $\alpha$ sufficiently close to $\alpha_1$, we have $\bar x_2^\alpha(T)\geq\bar x_2(T)$ for those $\alpha$. This contradicts~\eqref{intergral_inequality}.
	    			    		
	    		\medskip
	    		Going back to the original problem $(FP_2)$, we can summarize the results obtained in this section as follows.
	    		
	    		\begin{Theorem}\label{Thm2}  Given any $a,\lambda$ with $a>\lambda>0$, define $\rho=\dfrac{1}{\lambda}\ln \dfrac{a}{a-\lambda}>0$, $\bar t=T-\rho$ $\bar x_0=1-a(\bar t-t_0)$, and $\alpha_1=t_0+a^{-1}(1-x_0)$. Then, problem $(FP_2)$  has a unique local solution $(\bar x,\bar u)$, which is a global solution, where $\bar u(t)=-a^{-1}\dot{\bar x}(t)$ for almost everywhere $t\in [t_0, T]$ and $\bar x(t)$ can be described as follows: 
	    	    \begin{description}
	    	    \item{\rm (a)} If $t_0 \geq \bar t$ (i.e, $T-t_0 \leq \rho$), then 
	    	    \begin{equation*}
	    	     \bar x(t)=x_0-a(t-t_0), \quad t \in [t_0, T].
	    	     \end{equation*}
	    		 \item{\rm (b)} If $t_0 < \bar t$ and $x_0<\bar x_0$ (i.e, $\rho< T-t_0<\rho+a^{-1}(1-x_0)$), then 
	    		 \begin{equation*}
	    		 \bar x(t)=
	    		 \begin{cases}
	    		 x_0+a(t-t_0), \quad & t \in [t_0, \bar t]\\
	    		 x_0-a(t+t_0-2\bar t), & t \in (\bar t, T].
	    		 \end{cases}
	    		 \end{equation*}
	    		 \item{\rm (c)} If $t_0 < \bar t$ and $x_0\geq \bar x_0$ (i.e, $T-t_0\geq\rho+a^{-1}(1-x_0)$), then 
	    		 \begin{equation*}
	    		 \bar x(t)=
	    		 \begin{cases}
	    		  x_0+a(t-t_0), \quad & t \in [t_0, \alpha_1]\\
	    		 1-a(t-\alpha_1), & t \in (\alpha_1, T].
	    		 \end{cases}
	    		 \end{equation*}
	    		\end{description}
	    		\end{Theorem}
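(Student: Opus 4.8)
The plan is to turn the long case analysis carried out above into a clean three-way classification of the minimizer, and then to combine it with the existence result of Subsection~\ref{SE_1-sided_S_constraint} so that the word ``necessary'' is upgraded to ``the global minimizer.'' The overall logic is the standard one: a global minimizer exists, it is in particular a $W^{1,1}$ local minimizer and therefore satisfies conditions (i)--(iv) of Theorem~\ref{V_thm9.3.1 necessary condition}; the analysis then leaves exactly one admissible process in each parameter regime, so that unique candidate must be the global minimizer.

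First I would invoke Subsection~\ref{SE_1-sided_S_constraint}, where Theorem~\ref{Filippov's_Existence_Thm} was shown to apply: $(FP_{2a})$ has a $W^{1,1}$ global minimizer, and by the equivalence of $(FP_2)$ and $(FP_{2a})$ the same holds for $(FP_2)$. Any global minimizer is a $W^{1,1}$ local minimizer, so it falls under the exhaustive discussion of the contact set ${\mathcal T}_1=\{t\in[t_0,T]:\bar x_1(t)=1\}$ conducted in Cases~1--4 and Subcases~4a, 4b.

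Next I would match those cases to the three items of the statement. When ${\mathcal T}_1=\emptyset$ (Case~1), the state constraint is inactive and the minimizer coincides with the unconstrained solution of Theorem~\ref{Thm1}: if $t_0\geq\bar t$ one gets the descending arc $\bar x_1(t)=x_0-a(t-t_0)$ of item~(a), while if $t_0<\bar t$ one gets the up-then-down arc; the requirement $\bar x_1(\bar t)<1$ forces $x_0<\bar x_0$ (equivalently $T-t_0<\rho+a^{-1}(1-x_0)$), which is precisely item~(b). The complementary regime $t_0<\bar t$ with $x_0\geq\bar x_0$ must have ${\mathcal T}_1\neq\emptyset$; here Case~2 and Subcase~4b were excluded, while Case~3 (the instance $x_0=1$, so $\alpha_1=t_0$) and Subcase~4a produce the arc that rises with slope $a$ until it meets the constraint at $\alpha_1=t_0+a^{-1}(1-x_0)$ and then descends with slope $-a$. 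This is item~(c), and the inequality $\alpha_1\leq\bar t$ established at the end of Case~4 guarantees that this description is self-consistent.

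Finally I would conclude uniqueness and global optimality at once: in each of the three disjoint and exhaustive parameter regimes exactly one process survives the necessary conditions, and since the known global minimizer is among the surviving processes it must equal that single candidate; hence the local minimizer is unique and is the global minimizer. Translating back to $(FP_2)$ is immediate, because $\bar x_1$ in $(FP_{2a})$ plays the role of $\bar x$ in $(FP_2)$ and the state equation $\dot x=-au$ gives $\bar u(t)=-a^{-1}\dot{\bar x}(t)$ almost everywhere, yielding the stated formulas. The step I expect to be the most delicate is the bookkeeping at the boundary $x_0=\bar x_0$ (equivalently $\alpha_1=\bar t$): one must verify that this is exactly where the free interior trajectory of Case~1 stops satisfying $\bar x_1(\bar t)<1$ and the constraint becomes active, so that items~(b) and~(c) meet continuously and together with item~(a) cover every admissible $(\lambda,a,x_0,t_0,T)$ without gap or overlap.
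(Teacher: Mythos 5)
Your proposal is correct and follows essentially the same route as the paper: the paper's own proof simply combines the results of Case~1, Case~3, and Case~4 with the existence result of Subsection~\ref{SE_1-sided_S_constraint}, identifying $\bar x_1$ in $(FP_{2a})$ with $\bar x$ in $(FP_2)$. You merely make explicit the ``existence plus elimination of all but one candidate'' logic that the paper leaves implicit, which is a faithful reading rather than a different argument.
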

	    		\begin{proof}  To obtain the assertions (a)--(c), it suffices to combine the results formulated in Case~1, Case~3, and Case~4, having in mind that $\bar x_1(t)$ in $(FP_{2a})$ plays the role of $\bar x(t)$ in $(FP_2)$.
	    			  	\end{proof}	    		
	  	  	
	    \section{Conclusions}\label{Conclusions}
	    
	    We have analyzed a maximum principle for finite horizon state constrained problems via two parametric examples of optimal control problems of the Langrange type, which have five parameters. These problems resemble the optimal growth problem in mathematical economics. The first example is related to control problems without state constraints. The second one belongs to the class of irregular control problems with unilateral state constraints. We have proved that the control problem in each example has a unique local solution, which is a global solution. Moreover, we are able to present an explicit description of the optimal process with respect to the five parameters.
	    
	    The obtained results allows us to have a deep understanding of the maximum principle in question. 
	    
	    It seems to us that, following the approach adopted in this paper, one can study economic optimal growth models by advanced tools from functional analysis and optimal control theory.


\begin{thebibliography}{99} 
\bibitem{Basco_Cannarsa_Frankowska_2018}  V.~Basco, P.~Cannarsa, H.~Frankowska, \textit{Necessary conditions for infinite horizon optimal control problems with state constraints}, Preprint, 2018.   

\bibitem{Cesari_1983} L.~Cesari, \textit{Optimization Theory and Applications}, 1st edition, Springer-Verlag, New York, 1983.

\bibitem{Ioffe_Tihomirov_1979} A.~D.~Ioffe,  V.~M.~Tihomirov, \textit{Theory of Extremal Problems}, North-Holland Publishing Company, Amsterdam, 1979.

\bibitem{Hartl_Sethi_Vickson_1995} R.~F.~Hartl, S.~P.~Sethi, R.~G.~Vickson, \textit{A survey of the maximum principles for optimal control problems with state constraints}, SIAM Rev. \textbf{37} (1995), 181--218.

\bibitem{Kolmogorov_Fomin_1970} A.~N.~Kolmogorov, S.~V.~Fomin,  \textit{Introductory Real Analysis.} Revised English edition. Translated from the Russian and edited by R.~A.~Silverman, Dovers Publications, Inc., New York, 1970.

\bibitem{Luenberger_1969}  D.~G.~Luenberger, \textit{Optimization by Vector Space Methods}. John Wiley \& Sons, New York, 1969.

\bibitem{Mordukhovich_2006a} B.~S.~Mordukhovich, \textit{Variational Analysis and Generalized Differentiation}, Vol. I: Basic Theory, Springer, New York, 2006.

\bibitem{Mordukhovich_2006b} B.~S.~Mordukhovich, \textit{Variational Analysis and Generalized Differentiation}, Vol. II: Applications, Springer, New York, 2006.

\bibitem{Mordukhovich_2018} B.S.~Mordukhovich, {\it Variational Analysis and Applications}, Springer, Berlin, Switzerland, 2018.

\bibitem{Phu_1989} H.~X.~Phu, \textit{A solution method for regular optimal control problems with state constraints}, J. Optim. Theory Appl. \textbf{62} (1989), 489--513.

\bibitem{Phu_1992} H.~X.~Phu, \textit{Investigation of a macroeconomic model
by the method of region analysis}, J. Optim. Theory Appl. \textbf{72} (1992), 319--332.

\bibitem{Pont_Bolt_Gamk_Mish_1962} L.~S.~Pontryagin, V.~G.~Boltyanskii, R.~V.~Gamkrelidze, E.~F.~Mishchenko, \textit{The Mathematical Theory of Optimal Processes}, Interscience Publishers John Wiley $\&$ Sons, Inc.,  New York--London, 1962.

\bibitem{Takayama_1974}  A.~Takayama, \textit{Mathematical Economics}, The Dryden Press, Hinsdale, Illinois, 1974.

\bibitem{Vinter_2000} R.~Vinter, \textit{Optimal Control}, Birkh\"auser, Boston, 2000.

\end{thebibliography}
\end{document}